\theoremstyle{plain}
\newtheorem{thm}{\hspace{6mm}Theorem}[section]
\newtheorem{pro}{\hspace{6mm}Proposition}[section]
\newtheorem{lem}{\hspace{6mm}Lemma}[section]
\theoremstyle{definition}
\theoremstyle{remark}
\newcommand{\V}[1]{\mathbf{#1}}
\newcommand{\email}[1]{\href{mailto:#1}{#1}}
\title{Consistency enforcement for the iterative solution of  weak Galerkin finite element approximation of Stokes flow}
\author{
Weizhang Huang\thanks{Department of Mathematics, the University of Kansas, 1460 Jayhawk Blvd, Lawrence, KS 66045, USA (\email{whuang@ku.edu}).}
\and
Zhuoran Wang\thanks{Department of Mathematics, the University of Kansas, 1460 Jayhawk Blvd, Lawrence, KS 66045, USA (\email{wangzr@ku.edu}).}
}
\date{} 
\begin{document}

\maketitle

\textbf{Abstract.}
Finite element discretization of Stokes problems can result in singular, inconsistent saddle point linear algebraic systems.
This inconsistency can cause many iterative methods to fail to converge. In this work, we consider the lowest-order
weak Galerkin finite element method to discretize Stokes flow problems and study a consistency enforcement
by modifying the right-hand side of the resulting linear system. It is shown that the modification of the scheme
does not affect the optimal-order convergence of the numerical solution. Moreover, inexact block diagonal
and triangular Schur complement preconditioners and the minimal residual method (MINRES) and
the generalized minimal residual method (GMRES)
are studied for the iterative solution of the modified scheme. Bounds for the eigenvalues and the residual
of MINRES/GMRES are established. Those bounds show that the convergence of MINRES and GMRES is independent
of the viscosity parameter and mesh size. The convergence of the modified scheme and effectiveness of the preconditioners
are verified using numerical examples in two and three dimensions.


\textbf{Keywords:}
Stokes flow, GMRES, MINRES, Weak Galerkin, Compatibility condition. 

\textbf{Mathematics Subject Classification (2020):}
65N30, 65F08, 65F10, 76D07

\section{Introduction}
\label{SEC:intro}
We consider the lowest-order weak Galerkin (WG) finite element approximation of Stokes flow problems in the form
\begin{equation}
\begin{cases}
\displaystyle
    -\mu \Delta \mathbf{u} + \nabla p  =  \mathbf{f},
      \quad \mbox{in } \; \Omega,
    \\
    \displaystyle
    \nabla \cdot \mathbf{u}  =  0,
     \quad \text{ in } \Omega,
    \\
    \displaystyle
    \mathbf{u}  =  \mathbf{g},
    \quad \mbox{on } \; \partial \Omega,
\end{cases}
\label{Eqn_StokesBVP}
\end{equation}
where
$ \Omega \subset \mathbb{R}^d$ $(d=2,3) $ is a bounded polygonal/polyhedral domain,
$ \mu>0 $ is the fluid kinematic viscosity,
$ \mathbf{u} $ is the fluid velocity,
$ p $ is the fluid pressure,
$ \mathbf{f} $ is a body force,
$\mathbf{g}$ is a non-zero boundary datum of the velocity satisfying the compatibility condition
$ \int_{\partial \Omega}\mathbf{g}\cdot\mathbf{n}=0 $,
and $\mathbf{n}$ is the unit outward normal to
the boundary of the domain. The resulting linear algebraic system is a singular saddle point system
where the singularity reflects the nonuniqueness of the pressure solution.
While it is well known (e.g., see \cite[Remark 6.12]{Elman-2014} and \cite[Section 10.2]{Vorst_2003})
that Krylov subspace methods, such as the minimal residual method (MINRES) and
the generalized minimal residual method (GMRES), work well for consistent singular systems,
the underlying system is nonconsistent in general when $g$ is not identically zero (cf. Section~\ref{SEC:formulation}).
This inconsistency can cause MINRES and GMRES to fail to converge.

The objective of this work is to study a simple consistency enforcement strategy by modifying the right-hand side
of the linear system that results from the WG discretization of (\ref{Eqn_StokesBVP}).
We shall prove that the optimal-order convergence is not affected by the modification.
Moreover, we consider inexact block diagonal and triangular Schur complement preconditioners
for the efficient iterative solution of the singular but consistent saddle point system
resulting from the modified scheme. Bounds for the eigenvalues and the residual of MINRES 
and GMRES for the corresponding preconditioned systems are established.
These bounds show that the convergence of MINRES and GMRES with
the inexact block diagonal and triangular Schur complement preconditioning is independent
of the fluid kinematic viscosity $\mu$ and mesh size.
It is worth mentioning that the preconditioned system associated a block diagonal Schur
complement preconditioner is diagonalizable and this MINRES can be used for system solving
and spectral analysis can be used to analyze the convergence of MINRES.
On the other hand, the preconditioned system with a block triangular Schur complement preconditioner
is not diagonalizable and we need to use GMRES for the iterative solution of the system.
Moreover, the spectral analysis cannot be used directly to analyze the convergence of GMRES.
Instead, it needs to be combined with Lemmas~{A.1} and A.2 of \cite{HuangWang_arxiv_2024}
that provide a bound for the residual of GMRES in terms of the norm of the off-diagonal blocks
in the preconditioned system and the performance of GMRES to the preconditioned Schur complement.

Numerical solution of Stokes flow problems
has continuously gained attention from researchers. Particularly,
a variety of finite element methods have been studied for those problems; e.g., see
\cite{Ainsworth_SINUM_2022} (mixed finite element methods),
\cite{Bevilacqua_SISC_2024,Wang2_CMAME_2021} (virtual element methods),
\cite{Lederer_JSC_2024,TuWangZhang_ETNA_2020} (hybrid discontinuous Galerkin methods),
and \cite{MR3261511,TuWang_CMA_2018,WangYe_Adv_2016} (weak Galerkin (WG) finite element methods).
We use here the lowest-order WG method for the discretization of Stokes flow problems. 
It is known (cf. Lemma~\ref{lem:err1} or \cite{WANG202290})
that the lowest-order WG method, without using stabilization terms, satisfies the inf-sup condition (for stability)
and has the optimal-order convergence. Moreover, the error in the velocity is independent of the error in the pressure
(pressure-robustness) and the error in the pressure is independent of the viscosity $\mu$ ($\mu$-semi-robustness).
On the other hand, little work has been done so far for the efficient iterative solution of the saddle point system
arising from the WG approximation of Stokes problems.
Recently, the authors considered in \cite{HuangWang_arxiv_2024} 
the iterative solution of the lowest-order WG approximation of Stokes problems
through regularization and provided a convergence analysis for MINRES and GMRES with
inexact block diagonal and triangular Schur complement preconditioners, respectively.

The rest of this paper is organized as follows.
In Section~\ref{SEC:formulation}, the weak formulation for Stokes flow 
and its discretization by the lowest-order WG method are described.
Consistency enforcement and related error analysis are discussed in Section~\ref{sec:consistency}.
In Section~\ref{sec:precond}, the block diagonal and block triangular Schur complement preconditioning and convergence of MINRES and GMRES 
for the modified system are studied.
Section~\ref{SEC:numerical} presents two- and three dimensional numerical experiments to verify the theoretical findings and showcase the effectiveness of the preconditioners.
Finally, the conclusions are drawn in Section~\ref{SEC:conclusions}.

\section{Weak Galerkin discretization for Stokes flow}
\label{SEC:formulation}
In this section we describe the lowest-order weak Galerkin approximation of the Stokes flow problem (\ref{Eqn_StokesBVP}).

The weak formulation of (\ref{Eqn_StokesBVP}) is to 
find $ \mathbf{u} \in H^1(\Omega)^d$ and $ p \in L^2(\Omega) $
such that $ \mathbf{u}|_{\partial\Omega} = \mathbf{g} $ (in the weak sense) and
\begin{equation}
\begin{cases}
  \mu (\nabla\mathbf{u}, \nabla\mathbf{v}) - (p, \nabla\cdot\mathbf{v})
= (\mathbf{f}, \mathbf{v}),
   \quad \forall \mathbf{v} \in H^1_0(\Omega)^d,
  \\ 
  -(\nabla\cdot\mathbf{u}, q)
  =  0,
 \quad \forall q \in L^2(\Omega) .
\end{cases}
\label{VarForm}
\end{equation}
Consider a connected quasi-uniform simplicial mesh $\mathcal{T}_h = \{K\}$ on $\Omega$. A mesh is called to be connected
if any pair of its elements is linked by a chain of elements that share an interior facet with each other.
We introduce the following discrete weak function spaces on $\Omega$:
\begin{align}
     \displaystyle
    \mathbf{V}_h
    & = \{ \mathbf{u}_h = \{ \mathbf{u}_h^\circ, \mathbf{u}_h^\partial \}: \;
      \mathbf{u}_h^\circ|_{K} \in P_0(K)^d, \;
      \mathbf{u}_h^\partial|_e \in P_0(e)^d, \;
      \forall K \in \mathcal{T}_h, \; e \in \partial K \},
    \\ 
    \displaystyle
    W_h &= \{ p_h \in L^2(\Omega): \; p_h|_{K} \in P_0(K), \; \forall K \in \mathcal{T}_h\},
\end{align}
where $P_0(K)$ and $P_0(e)$ denote the set of constant polynomials defined on element $K$ and facet $e$, respectively.
Note that $\mathbf{u}_h \in \mathbf{V}_h$ is approximated on both interiors and facets of mesh elements
while $p_h \in W_h$ is approximated only on element interiors.
Then, we can define the lowest-order WG approximation of the Stokes problem \eqref{Eqn_StokesBVP} as:
finding $ \mathbf{u}_h \in \mathbf{V}_h $ and $ p_h \in W_h $
such that $ \mathbf{u}_h^\partial|_{\partial \Omega} = Q_h^{\partial}\mathbf{g} $ and
\begin{equation}
\begin{cases}
    \displaystyle
    \mu \sum_{K\in\mathcal{T}_h} (\nabla_w \mathbf{u}_h,\nabla_w \mathbf{v})_K 
    -\sum_{K \in \mathcal{T}_h}(p_h^{\circ}, \nabla_w\cdot\mathbf{v})_K
    =  \sum_{K \in \mathcal{T}_h} (\mathbf{f}, \mathbf{\Lambda}_h\mathbf{v})_K,
      \quad \forall \mathbf{v} \in \mathbf{V}_h^0,
    \\ 
    \displaystyle
    -\sum_{K \in \mathcal{T}_h}(\nabla_w\cdot\mathbf{u}_h,q^{\circ})_K
    =  0,
   \quad \forall q \in W_h ,
\end{cases}
\label{scheme}
\end{equation}
where $Q_h^{\partial}$ is a $L^2$-projection operator onto $\V{V}_h$ restricted on each facet
and the lifting operator $\mathbf{\Lambda}_h: \V{V}_h \to RT_0(\mathcal{T}_h)$ is defined \cite{Mu.2020,WANG202290} as
\begin{equation}
    \displaystyle
    ( (\mathbf{\Lambda}_h\mathbf{v}) \cdot \mathbf{n}, w )_{e}
      = ( \mathbf{v}^{\partial}\cdot\mathbf{n}, w )_{e},
        \quad \forall w \in P_0(e),\; \forall \V{v} \in \V{V}_h, \; \forall e \subset \partial K .
\label{Eqn_DefLambdah}
\end{equation}
Here, $RT_0(K)$ is the lowest-order Raviart-Thomas space, viz.,
\begin{align*}
    RT_0(K) = (P_0(K))^d + \V{x} \, P_0(K).
\end{align*}
Notice that $\mathbf{\Lambda}_h\mathbf{v}$ depends on $\mathbf{v}^{\partial}$
but not on $\mathbf{v}^{\circ}$. 

The discrete weak gradient and divergence operators in (\ref{scheme}) are defined as follows.
For a scalar function or a component of a vector-valued function, $u_h = (u_h^{\circ},u_h^{\partial})$,
the discrete weak gradient operator $\nabla_w: W_h \rightarrow RT_0(\mathcal{T}_h)$ is defined as follows
\begin{equation}
\label{weak-grad-1}
  (\nabla_w u_h, \mathbf{w})_K
  = (u^\partial_h, \mathbf{w} \cdot \mathbf{n})_{\partial K}
  - ( u^\circ_h , \nabla \cdot \mathbf{w})_K,
  \quad \forall \mathbf{w} \in RT_0(K),\quad \forall K \in \mathcal{T}_h ,
\end{equation}
where $\mathbf{n}$ is the unit outward normal to $\partial K$ and $(\cdot, \cdot)_K$
and $( \cdot, \cdot )_{\partial K}$ are the $L^2$ inner product on $K$ and $\partial K$, respectively.
For a vector-valued function $\mathbf{u}_h$, $\nabla_w \mathbf{u}_h$ is viewed as a matrix with each row representing
the weak gradient of a component.
By choosing $\V{w}$ properly in (\ref{weak-grad-1}) and using the fact that $\nabla_w u_h \in RT_0(K)$, we can obtain (e.g., see \cite{HuangWang_CiCP_2015})
\begin{align}
   & \nabla_w \varphi_K^{\circ} = - C_{K} (\V{x}-\V{x}_{K}) ,
   \label{grad_int}
   \\
   & \nabla_w \varphi_{K,i}^{\partial} = \frac{C_{K}}{d+1} (\V{x} -\V{x}_{K})
+ \frac{|e_{K,i}|}{|K|} \V{n}_{K,i}, \; i = 1, ...,d+1,
\label{grad_face}
\end{align}
where $\varphi_K^{\circ}$ and $\varphi_{K,i}^{\partial}$ denote the basis functions of $P_0(K)$
and $P_0(e_{K,i})$, respectively, $e_{K,i}$ denotes the $i$-th facet of $K$,
$\V{n}_{K,i}$ is the unit outward normal to $e_{K,i}$,
\begin{align*}
C_{K} = \frac{d\; |K|}{\| \V{x} - \V{x}_{K} \|_{K}^2} ,
\quad
\V{x}_{K} = \frac{1}{d+1} \sum_{i=1}^{d+1} \V{x}_{K,i} ,
\end{align*}
and $\V{x}_{K,i}$, $i = 1, ..., d+1$ denote the vertices of $K$.

The discrete weak divergence operator $\nabla_w \cdot: \mathbf{V}_h \to \mathcal{P}_0(\mathcal{T}_h)$ is defined
independently through
\begin{equation}
   (\nabla_w \cdot \mathbf{u}, w )_{K}
  = ( \mathbf{u}^\partial , w \mathbf{n})_{ e }
  - ( \mathbf{u}^\circ , \nabla w)_{K},
  \quad
  \forall w \in P_0(K) .
  \label{wk_div1}
\end{equation}
Note that $\nabla_w \cdot \V{u}|_K \in P_0(K)$. Moreover, by taking $w = 1$ we have
\begin{equation}
(\nabla_w \cdot \V{u}, 1)_{K} = \sum_{i=1}^{d+1} |e_{K,i}| \langle\V{u}\rangle_{e_{K,i}}^T \V{n}_{K,i} ,
\label{wk_div2}
\end{equation}
where $\langle\V{u}\rangle_{e_{K,i}}$ denotes the average of $\V{u}$ on facet $e_{K,i}$
and $|e_{K,i}|$ is the $(d-1)$-dimensional measure of $e_{K,i}$.

The scheme \eqref{scheme} achieves the optimal-order convergence as shown in the following lemma.

\begin{lem}
\label{lem:err1}
Let $ \mathbf{u} \in H^{2}(\Omega)^d $ and $p \in H^1(\Omega)$ be the exact solutions for the Stokes problem (\ref{VarForm}) and let $\mathbf{u}_h\in \mathbf{V}_h$ and $p_h \in W_h$ be numerical solutions for \eqref{scheme}. 
Assume that $ \mathbf{f} \in L^2(\Omega)^d $. 
Then, the followings hold true:
\begin{align}
& \| p - p_h \| \le C h \|\V{f}\|,
\label{err-p}
\\
&   \| \nabla \mathbf{u} - \nabla_w \mathbf{u}_h \|
  \leq C h \|\mathbf{u}\|_2,
\label{err-du}
  \\
  & \| \V{u} - \V{u}_h \| = \| \V{u} - \V{u}_h^{\circ} \| \leq C h \|\mathbf{u}\|_2,
\label{err-u}
\\
&   \| Q_h^\circ\mathbf{u} - \mathbf{u}_h^\circ\|
  \leq C h^{2} \|\mathbf{u}\|_2,
\label{err-u0}
\end{align}
where $\| \cdot \| = \| \cdot \|_{L^2(\Omega)}$, $\|\cdot\|_2 = \|\cdot\|_{H^2(\Omega)}$, 
$C$ is a constant independent of $ h $ and $ \mu $, and $Q_h^\circ$ is a $L^2$-projection operator
for element interiors satisfying $Q_h^\circ\mathbf{u}|_K = \langle \V{u}\rangle_K, \;  \forall K \in \mathcal{T}_h$.
\end{lem}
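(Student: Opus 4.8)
The plan is to follow the standard weak Galerkin error framework, built on two commuting relations between the discrete weak operators and the $L^2$ projections, together with the discrete inf-sup stability already known for this scheme. Writing $\mathbf{Q}_h\mathbf{u} = \{Q_h^\circ\mathbf{u},Q_h^\partial\mathbf{u}\}$ for the componentwise $L^2$ projection of $\mathbf{u}$ into $\mathbf{V}_h$, I would first establish the identity $\nabla_w(\mathbf{Q}_h\mathbf{u})|_K = \mathbb{P}_h(\nabla\mathbf{u})|_K$, where $\mathbb{P}_h$ is the $L^2$ projection onto $RT_0(K)$. This follows directly from \eqref{weak-grad-1}: since $\mathbf{w}\cdot\mathbf{n}$ is constant on each facet and $\nabla\cdot\mathbf{w}$ is constant on $K$ for $\mathbf{w}\in RT_0(K)$, the projections may be removed at no cost, and element-wise integration by parts recovers $(\nabla\mathbf{u},\mathbf{w})_K$. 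The same argument applied to \eqref{wk_div1} gives $\nabla_w\cdot(\mathbf{Q}_h\mathbf{u}) = \mathbb{P}_h^0(\nabla\cdot\mathbf{u})$, the $L^2$ projection of $\nabla\cdot\mathbf{u}$ onto $P_0(\mathcal{T}_h)$; in particular $\nabla_w\cdot(\mathbf{Q}_h\mathbf{u}) = 0$ because $\nabla\cdot\mathbf{u} = 0$.

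With $\mathbf{e}_h = \mathbf{Q}_h\mathbf{u} - \mathbf{u}_h$ and $\varepsilon_h = Q_h^\circ p - p_h$, I would derive the error equations by inserting $\mathbf{Q}_h\mathbf{u}$ and $Q_h^\circ p$ into \eqref{scheme} and using the commuting identity, obtaining $\mu(\nabla_w\mathbf{e}_h,\nabla_w\mathbf{v}) - (\varepsilon_h,\nabla_w\cdot\mathbf{v}) = \ell_h(\mathbf{v})$, where the residual $\ell_h$ measures the difference between $(\mathbf{f},\mathbf{\Lambda}_h\mathbf{v})$ and the element-wise integration by parts of $-\mu\Delta\mathbf{u}+\nabla p$ tested against $\mathbf{\Lambda}_h\mathbf{v}$. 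Here the key structural fact is the second commuting property $\nabla\cdot(\mathbf{\Lambda}_h\mathbf{v}) = \nabla_w\cdot\mathbf{v}$, which follows from \eqref{Eqn_DefLambdah} and \eqref{wk_div2}: since $\mathbf{e}_h$ satisfies $\nabla_w\cdot\mathbf{e}_h = 0$ (from the second equation of \eqref{scheme} together with $\nabla_w\cdot\mathbf{Q}_h\mathbf{u}=0$), the lifted field $\mathbf{\Lambda}_h\mathbf{e}_h$ is exactly divergence-free and $H(\mathrm{div})$-conforming, so the pressure part of $\mathbf{f}$ integrates to zero against it. This is the mechanism of pressure-robustness, and it leaves $\ell_h(\mathbf{e}_h)$ containing only the viscous consistency error, which I expect to bound by $C\mu h\|\mathbf{u}\|_2\|\nabla_w\mathbf{e}_h\|$ using $RT_0$ approximation and trace/scaling estimates.

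The velocity estimates then follow quickly. Taking $\mathbf{v}=\mathbf{e}_h$ and using $\nabla_w\cdot\mathbf{e}_h=0$ gives $\mu\|\nabla_w\mathbf{e}_h\|^2 = \ell_h(\mathbf{e}_h) \le C\mu h\|\mathbf{u}\|_2\|\nabla_w\mathbf{e}_h\|$, so the factor $\mu$ cancels and $\|\nabla_w\mathbf{e}_h\|\le Ch\|\mathbf{u}\|_2$; a triangle inequality with $\nabla_w\mathbf{Q}_h\mathbf{u}=\mathbb{P}_h\nabla\mathbf{u}$ yields \eqref{err-du}, and a discrete Poincar\'e inequality together with $\|\mathbf{u}-\mathbf{u}_h\|=\|\mathbf{u}-\mathbf{u}_h^\circ\|$ gives \eqref{err-u}. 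For the pressure estimate \eqref{err-p}, I would use the discrete inf-sup condition to bound $\|\varepsilon_h\|$ by $\sup_{\mathbf{v}\in\mathbf{V}_h^0}(\varepsilon_h,\nabla_w\cdot\mathbf{v})/\|\nabla_w\mathbf{v}\|$ and substitute the error equation, so that the supremum is controlled by $\mu\|\nabla_w\mathbf{e}_h\|$ plus the consistency residual, i.e.\ by terms of order $h(\mu\|\mathbf{u}\|_2+\|p\|_1)$. The elliptic regularity estimate $\mu\|\mathbf{u}\|_2+\|p\|_1 \le C\|\mathbf{f}\|$ for the Stokes problem then converts this into the $\mu$-independent bound $\|\varepsilon_h\|\le Ch\|\mathbf{f}\|$, and a triangle inequality with $\|p-Q_h^\circ p\|\le Ch\|p\|_1\le Ch\|\mathbf{f}\|$ completes \eqref{err-p}.

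The superconvergence estimate \eqref{err-u0} is not reachable by energy arguments, since the discrete Poincar\'e inequality only yields $\|Q_h^\circ\mathbf{u}-\mathbf{u}_h^\circ\|\le C\|\nabla_w\mathbf{e}_h\|=O(h)$. To gain the extra power of $h$ I would run an Aubin--Nitsche duality argument: solve an auxiliary Stokes problem with data $Q_h^\circ\mathbf{u}-\mathbf{u}_h^\circ$, test the error equations with the projection of its solution, and use $H^2$ regularity of the dual solution together with the already-established $O(h)$ estimates and the two commuting identities. I expect the main obstacle to be the sharp, $\mu$-semi-robust pressure bound \eqref{err-p}: keeping the estimate proportional to $\|\mathbf{f}\|$ and free of $\mu$ requires carefully combining the viscous residual with $\mu\nabla_w\mathbf{e}_h$ before estimating, and invoking the regularity bound $\mu\|\mathbf{u}\|_2 + \|p\|_1\le C\|\mathbf{f}\|$, rather than bounding crudely by $\|\mathbf{u}\|_2$ and $\|p\|_1$ separately. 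The duality argument for \eqref{err-u0} is a secondary hurdle, hinging on the regularity of the dual solution and the same commuting structure applied to the dual problem.
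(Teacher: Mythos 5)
The paper proves this lemma only by citation (to Theorem 4.5 of Mu--Ye--Zhang and Theorem 3 of Wang--Wang--Liu), so there is no in-paper argument to compare step by step. Your sketch, however, is exactly the standard argument used in those references, and the paper itself reruns the same machinery in Section~\ref{sec:consistency} for the modified scheme: your commuting identities and error equations are Lemma~\ref{errorequations} with residuals \eqref{Eqn_R1R2}; your divergence identity $\nabla\cdot(\mathbf{\Lambda}_h\mathbf{v})=\nabla_w\cdot\mathbf{v}$ and the resulting pressure-robustness mechanism are \eqref{lifting_op_2} and \eqref{W_Lift_3}; your inf-sup-plus-regularity pressure bound is \eqref{pro32} combined with the regularity estimate $\mu\|\mathbf{u}\|_2+\|p\|_1\le C\|\mathbf{f}\|$ invoked in Theorem~\ref{thm:modified_scheme_err}; and your duality argument for \eqref{err-u0} is the dual-problem argument built on \eqref{Eqn_DualProb}. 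Setting $\alpha_h=0$ throughout Section~\ref{sec:consistency} recovers precisely \eqref{err-p}--\eqref{err-u0}, so your proposal is correct and takes essentially the same route as the analysis the paper relies on.
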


\begin{proof}
The proof of these error estimates can be found in \cite[Theorem 4.5]{MuYeZhang_SISC_2021} and \cite[Theorem 3]{WANG202290}.
\end{proof}


To cast (\ref{scheme}) into a matrix-vector form,
hereafter we use $\V{v}_h$ interchangeably for the WG approximation of $\V{v}$ in $\V{V}_h$
and the vector formed by its values $(\V{v}_{h,K}^{\circ}, \V{v}_{h,K,i}^{\partial})$
for $i = 1, ..., d+1$ and $K \in \mathcal{T}_h$,
excluding those on $\partial \Omega$.
Similarly, $\V{q}_h$ is used to denote both the WG approximation of $q$ and
the vector formed by $q_{h,K}$ for all $K \in \mathcal{T}_h$.
Then, we can write \eqref{scheme} into 
\begin{equation}
    \begin{bmatrix}
        \mu A & -(B^{\circ})^T \\
       -B^{\circ} & \mathbf{0}
    \end{bmatrix}
    \begin{bmatrix}
        \mathbf{u}_h \\
        \mathbf{p}_h
    \end{bmatrix}
    =
    \begin{bmatrix}
        \mathbf{b}_1 \\
        \mathbf{b}_2
    \end{bmatrix},
    \label{scheme_matrix}
\end{equation}
where the matrices $A$ and $B^{\circ}$ and vectors $\V{b}_1$ and $\V{b}_2$ are defined as
\begin{align}
\mathbf{v}^T A \mathbf{u}_h  & =  \sum_{K\in\mathcal{T}_h} (\nabla_w \mathbf{u}_h,\nabla_w \mathbf{v})_K
    = \sum_{K\in\mathcal{T}_h} (\V{u}_{h,K}^{\circ}\nabla_w \varphi_K^{\circ},  \nabla_w \mathbf{v})_K
    \label{A-1}  \\
   & 
   \displaystyle
    + \sum_{K\in\mathcal{T}_h} \sum^{d+1}_{\substack{i = 1 \\e_{K,i} \notin \partial \Omega}}(\V{u}_{h,K,i}^{\partial}\nabla_w \varphi_{K,i}^{\partial},\nabla_w \mathbf{v})_K,
    \quad \forall \mathbf{u}_h, \mathbf{v} \in \mathbf{V}_h^0,
    \notag \\
    \mathbf{q}^T B^{\circ} \mathbf{u}_h  & =  \sum_{K \in \mathcal{T}_h}  (\nabla_{w}\cdot\mathbf{u}_h,q^{\circ})_K
     = \sum_{K\in\mathcal{T}_h} \sum^{d+1}_{\substack{i = 1 \\e_{K,i} \notin \partial \Omega}}
    |e_{K,i}| q_{K}^{\circ}(\V{u}_{h,K,i}^{\partial})^T \V{n}_{K,i},
    \quad \forall \mathbf{u}_h \in \mathbf{V}_h^0, \quad \forall q \in W_h ,
    \label{B-1} 
    \\
\mathbf{v}^T \V{b}_1 & = \sum_{K \in \mathcal{T}_h} (\mathbf{f}, \mathbf{\Lambda}_h\mathbf{v})_K
    - \mu \sum_{K\in\mathcal{T}_h} \sum_{\substack{i = 1\\ e_{K,i} \in \partial \Omega}}^{d+1}( (Q_{h}^{\partial}\V{g}) \nabla_w \varphi_{K,i}^{\partial},\nabla_w \mathbf{v})_K, \; \forall \mathbf{v} \in \mathbf{V}_h^0,
    \label{b1-1}
    \\
\mathbf{q}^T \V{b}_2 & = \sum_{K\in\mathcal{T}_h} \sum^{d+1}_{\substack{i = 1 \\e_{K,i} \in \partial \Omega}}
    |e_{K,i}| q_{K}^{\circ} (Q_{h}^{\partial}\V{g}|_{e_{K,i}} )^T \V{n}_{K,i},
    \quad \forall q \in W_h .
    \label{b2-1}
\end{align}




Notice that (\ref{scheme_matrix}) is a singular saddle point system where the pressure solution is unique up to a constant.
Moreover, the system is inconsistent in general when $\V{g}$ is not identically zero.
To explain this, from (\ref{b2-1}) we have
\begin{align}
\V{b}_2(K) & = \sum^{d+1}_{\substack{i = 1 \\e_{K,i} \in \partial \Omega}}
    |e_{K,i}| (Q_{h}^{\partial}\V{g}|_{e_{K,i}} )^T \V{n}_{K,i},
    \quad \forall K \in \mathcal{T}_h .
\label{b2-2}
\end{align}
This gives rise to
\begin{align}
\label{b2-3}
\sum_{K \in \mathcal{T}_h} \V{b}_2(K)
& = \sum_{K \in \mathcal{T}_h} \sum^{d+1}_{\substack{i = 1 \\e_{K,i} \in \partial \Omega}}
    |e_{K,i}| (Q_{h}^{\partial}\V{g}|_{e_{K,i}} )^T \V{n}_{K,i}
\\
& = \sum_{e \in \partial \Omega} |e| (Q_{h}^{\partial}\V{g}|_{e} )^T \V{n}_{e}
= \sum_{e \in \partial \Omega} \int_{e} (Q_{h}^{\partial}\V{g}|_{e} )^T \V{n} d S,
\notag 
\end{align}
where we have used the fact that $\partial \Omega$ consists of element facets because $\Omega$ is assumed
to be a polygon or a polyhedron. 
If we choose $Q_{h}^{\partial}\V{g}|_{e} = \langle \V{g} \rangle_e$, $\forall e \in \partial \Omega$, from the compatibility condition
we have
\[
\sum_{K \in \mathcal{T}_h} \V{b}_2(K) = \sum_{e \in \partial \Omega} \int_{e} \langle \V{g} \rangle_e^T \V{n} d S
= \sum_{e \in \partial \Omega} \int_{e} \V{g}^T \V{n} d S = 0 .
\]
Unfortunately, in general the averages $\langle \V{g} \rangle_e$ need to
be approximated numerically. The approximation error can cause a non-zero
$\sum_{K \in \mathcal{T}_h} \V{b}_2(K)$ and thus the inconsistency of the system.
This can also happen if $Q_{h}^{\partial}$ is defined differently, for instance, $Q_{h}^{\partial}\V{g}|_{e}$ is defined as
the value of $\V{g}$ at the barycenter of $e$.
Krylov subspace methods, like MINRES and GMRES, may fail to converge when applied to this inconsistent singular system
although they are known to work well for consistent singular systems, at least when the initial guess is taken as zero;
e.g., see \cite[Remark 6.12]{Elman-2014} and \cite[Section 10.2]{Vorst_2003}.

\section{Consistency enforcement and its effects on the optimal-order convergence}
\label{sec:consistency}

In this section we consider an approach of enforcing the consistency/compatibility condition
by modifying the right-hand term $\V{b}_2$ and study the effects of this modification
on the optimal-order convergence of the numerical solution.

We propose to modify $\V{b}_2$ to enforce the consistency. Specifically, we define
\begin{align}
\tilde{\V{b}}_2(K) =  \V{b}_2(K) - \frac{\alpha_h}{N}, \quad \forall K \in \mathcal{T}_h,
\label{b2-4}
\end{align}
where $N$ is the number of elements in $\mathcal{T}_h$ and
\begin{align*}
    \alpha_h = \sum_{K \in \mathcal{T}_h} \V{b}_2(K) =  \sum_{e \in \partial \Omega} \int_{e} (Q_h^\partial \V{g}|_e)^T \V{n} d S .
\end{align*}
By definition, we have $\sum_{K} \tilde{\V{b}}_2(K) = 0$.
Moreover, from the compatibility condition $\int_{\partial \Omega} \V{g} \cdot \V{n} d S = 0$, we can rewrite $\alpha_h$ as
\begin{align}
    \alpha_h =  \sum_{e \in \partial \Omega} \int_{e} (Q_h^\partial \V{g}|_e - \V{g})^T \V{n} d S 
    = \sum_{e \in \partial \Omega} |e| \left ( Q_h^\partial \V{g}|_e - \langle \V{g}\rangle_e \right )^T \V{n}_e .
    \label{alpha}
\end{align}

The modified scheme reads as
\begin{equation}
    \begin{bmatrix}
        \mu A & -(B^{\circ})^T \\
       -B^{\circ} & \mathbf{0}
    \end{bmatrix}
    \begin{bmatrix}
        \mathbf{u}_h \\
        \mathbf{p}_h
    \end{bmatrix}
    =
    \begin{bmatrix}
        \mathbf{b}_1 \\[0.05in]
        \tilde{\V{b}}_2
    \end{bmatrix} ,
    \label{modified_scheme_matrix}
\end{equation}
where $A$, $B^{\circ}$, and $\V{b}_1$ are given in \eqref{A-1}, \eqref{B-1}, and \eqref{b1-1}, respectively.
Note that \eqref{modified_scheme_matrix} is still singular but consistent.



An immediate question about the modified scheme is how much the accuracy of the numerical solution is affected by the modification.
To answer this, we provide an error analysis for the modified scheme (\ref{modified_scheme_matrix}) in the following.


Let $ \mathbf{u}\in H^{2}(\Omega)^d $ and $ p \in H^1(\Omega) $
be the solutions of the Stokes problem (\ref{Eqn_StokesBVP}) and $ \mathbf{u}_h\in \mathbf{V}_h$ and $p_h\in W_h$
be the numerical solutions of \eqref{modified_scheme_matrix}.
We split the error into the projection and discrete error as 
\begin{equation}
\begin{cases}
   \displaystyle
  \mathbf{u} - \mathbf{u}_h = (\mathbf{u} - Q_h\mathbf{u}) + \mathbf{e}_h,
    \quad \mathbf{e}_h = Q_h\mathbf{u} - \mathbf{u}_h,
  \\
  \displaystyle
  p - p_h = (p - Q_h p) + e_h^p,
   \qquad e_h^p = Q_h p - p_h,
\end{cases}
\label{Eqn_DiscErrs}
\end{equation}
where the $L^2$ projection operator is defined as $Q_h = (Q_h^{\circ}, Q_h^{\partial})$.
The operator $Q_h$ is considered to be componentwise when applied to vector-valued functions.
The projection error $ \mathbf{u} - Q_h\mathbf{u} $ and $ p - Q_h p $
are determined by the approximation capacity of the finite element spaces.
Our primary focus is on $ \mathbf{e}_h $ and $ e_h^p $.

\begin{lem}
\label{lem1}
There hold
\begin{align}
\label{trace1}
&  \|w\|_e^2\leq C(h^{-1}\|w\|_K^2+h\|\nabla w\|_K^2),\quad \forall w \in H^1(K), \quad \forall e \in \partial K, \quad \forall K \in \mathcal{T}_h,
\\
\label{trace2}
&  \|W\|_{K}^2 \leq C h \|W\mathbf{n}\|^2_{\partial K}, \quad \forall W \in RT_0^d, \quad \quad \forall K \in \mathcal{T}_h,
\\ 
& \displaystyle 
  \sum_{K \in \mathcal{T}_h} h^{-1}
    \|\mathbf{v} ^{\partial}-\mathbf{v} ^\circ\|_{\partial K}^2
    \leq C |\hspace{-.02in}|\hspace{-.02in}|\mathbf{v}|\hspace{-.02in}|\hspace{-.02in}|^2,
    \quad \forall \mathbf{v} \in \mathbf{V}_h^0 ,
\label{Eqn_BndDscrpDWG}
\\
&
   \sum_{K \in \mathcal{T}_h}\|  \V{\Lambda}_h \V{v} - \V{v}^{\circ} \|_K \le C(\sum_{K \in \mathcal{T}_h}\sum_{e \in \partial K} h \| \V{v}^{\partial} - \V{v}^{\circ} \|_e^2 )^{1/2}, \quad \forall \mathbf{v} \in \mathbf{V}_h ,
   \label{lifting_property}
\end{align}
where $|\hspace{-.02in}|\hspace{-.02in}|\mathbf{v}|\hspace{-.02in}|\hspace{-.02in}| = \| \nabla_w \V{v} \|$.
\end{lem}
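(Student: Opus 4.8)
The four estimates separate naturally into two standard scaling lemmas, \eqref{trace1} and \eqref{trace2}, and two weak-Galerkin-specific consequences, \eqref{Eqn_BndDscrpDWG} and \eqref{lifting_property}. For \eqref{trace1} the plan is the usual affine-scaling argument: pull $w$ back to the reference simplex $\hat K$ through the affine map of diameter $h$, invoke the continuous trace theorem $\|\hat w\|_{\hat e}^2 \le C\|\hat w\|_{H^1(\hat K)}^2$ there, and scale back to $K$ using $\|w\|_e^2 \sim h^{d-1}\|\hat w\|_{\hat e}^2$, $\|w\|_K^2\sim h^{d}\|\hat w\|_{\hat K}^2$, and $\|\nabla w\|_K^2\sim h^{d-2}\|\hat\nabla\hat w\|_{\hat K}^2$. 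Quasi-uniformity of $\mathcal{T}_h$ makes the reference constant uniform over all elements, and collecting the powers of $h$ produces exactly the weights $h^{-1}$ and $h$ on the right.

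For \eqref{trace2} I would argue by finite-dimensional norm equivalence followed by scaling; I regard this estimate as the technical workhorse of the lemma. On a single simplex the map $W\mapsto W\mathbf{n}|_{\partial K}$ restricted to $RT_0$, applied row by row in the matrix-valued case, is injective: this is precisely the unisolvence of the Raviart--Thomas degrees of freedom, since the $d+1$ constant normal fluxes determine an element of $RT_0$. Hence $W\mapsto\|W\mathbf{n}\|_{\partial \hat K}$ and $W\mapsto\|W\|_{\hat K}$ are equivalent norms on the finite-dimensional reference space $RT_0(\hat K)$. Transporting this equivalence to $K$ by affine (Piola) scaling and tracking the measure ratio $|K|/|\partial K|\sim h$ introduces exactly the factor $h$, and a dimensional count ($h^{d}$ against $h\cdot h^{d-1}$) confirms the balance.

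Estimate \eqref{Eqn_BndDscrpDWG} is the heart of the lemma, and I would prove it elementwise for a scalar component $v=(v^\circ,v^\partial)$ and then sum and apply componentwise to $\mathbf{v}$. The key identity comes from \eqref{weak-grad-1}: since $v^\circ$ is constant on $K$, integration by parts gives $(v^\circ,\nabla\cdot\mathbf{w})_K=(v^\circ,\mathbf{w}\cdot\mathbf{n})_{\partial K}$, so that
\begin{equation*}
(\nabla_w v,\mathbf{w})_K=(v^\partial-v^\circ,\mathbf{w}\cdot\mathbf{n})_{\partial K},\qquad \forall\,\mathbf{w}\in RT_0(K).
\end{equation*}
Now I would exploit the unisolvence of $RT_0$ again and choose the specific test function $\mathbf{w}^\ast\in RT_0(K)$ whose constant normal trace on each facet $e_{K,i}$ equals the constant value of $v^\partial-v^\circ$ there; this collapses the right-hand side to $\|v^\partial-v^\circ\|_{\partial K}^2$. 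Bounding the left-hand side by Cauchy--Schwarz and controlling $\|\mathbf{w}^\ast\|_K\le (Ch)^{1/2}\|\mathbf{w}^\ast\cdot\mathbf{n}\|_{\partial K}=(Ch)^{1/2}\|v^\partial-v^\circ\|_{\partial K}$ through \eqref{trace2} yields $\|v^\partial-v^\circ\|_{\partial K}\le (Ch)^{1/2}\|\nabla_w v\|_K$ after cancellation. Squaring, dividing by $h$, and summing over $K$ gives the claim. The only delicate point is the matching test-function construction, which is where the Raviart--Thomas structure is used essentially.

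Finally, \eqref{lifting_property} follows quickly from \eqref{trace2} together with the defining property \eqref{Eqn_DefLambdah}. Since constants lie in $RT_0$, the difference $\mathbf{\Lambda}_h\mathbf{v}-\mathbf{v}^\circ$ belongs to $RT_0(K)$, and by \eqref{Eqn_DefLambdah} its normal trace on each facet is $(\mathbf{\Lambda}_h\mathbf{v}-\mathbf{v}^\circ)\cdot\mathbf{n}=(\mathbf{v}^\partial-\mathbf{v}^\circ)\cdot\mathbf{n}$. Applying \eqref{trace2} and then $|(\mathbf{v}^\partial-\mathbf{v}^\circ)\cdot\mathbf{n}|\le|\mathbf{v}^\partial-\mathbf{v}^\circ|$ gives the local bound $\|\mathbf{\Lambda}_h\mathbf{v}-\mathbf{v}^\circ\|_K^2\le Ch\sum_{e\in\partial K}\|\mathbf{v}^\partial-\mathbf{v}^\circ\|_e^2$; summing over $K$ and collecting the local $L^2(K)$ norms into the global one yields the stated inequality. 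Overall, I expect the matching $RT_0$ test-function construction in \eqref{Eqn_BndDscrpDWG}, supported by the scaled norm equivalence \eqref{trace2}, to be the main obstacle, while the two trace/scaling estimates are routine.
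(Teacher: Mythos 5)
Your proposal differs from the paper structurally in that the paper does not prove Lemma~\ref{lem1} at all: its ``proof'' is a pointer to \cite{WANG202290} (Lemmas 2--4) and \cite{LiuSISC2018}. Your self-contained arguments are essentially the standard proofs living in those references, and three of the four are correct as written: affine scaling plus the reference trace theorem for \eqref{trace1}; finite-dimensional norm equivalence of $W\mapsto\|W\mathbf{n}\|_{\partial \hat K}$ and $W\mapsto\|W\|_{\hat K}$ on $RT_0(\hat K)$ (unisolvence of the normal-flux degrees of freedom), transported by Piola scaling under quasi-uniformity, for \eqref{trace2}; and, for \eqref{Eqn_BndDscrpDWG}, the identity $(\nabla_w v,\mathbf{w})_K=(v^\partial-v^\circ,\mathbf{w}\cdot\mathbf{n})_{\partial K}$ combined with the matching test function $\mathbf{w}^\ast\in RT_0(K)$ whose constant normal traces equal $v^\partial-v^\circ$, followed by Cauchy--Schwarz and \eqref{trace2}. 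That last mechanism is exactly the one used in \cite{WANG202290}, and your identification of \eqref{trace2} as the workhorse is apt.

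The one step that does not go through as described is the aggregation in \eqref{lifting_property}. Your local bound $\|\mathbf{\Lambda}_h\mathbf{v}-\mathbf{v}^\circ\|_K \le C\big(h\sum_{e\in\partial K}\|\mathbf{v}^\partial-\mathbf{v}^\circ\|_e^2\big)^{1/2}=:Cb_K$ is correct, but summing it over $K$ yields $\sum_K\|\mathbf{\Lambda}_h\mathbf{v}-\mathbf{v}^\circ\|_K\le C\sum_K b_K$, and passing from $\sum_K b_K$ to the stated right-hand side $\big(\sum_K b_K^2\big)^{1/2}$ costs a factor $\sqrt{N}$; ``collecting the local norms'' does not close this gap. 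In fact the inequality as literally printed in the lemma cannot hold with $C$ independent of $h$: taking $\mathbf{v}^\circ=0$ and $\mathbf{v}^\partial$ a fixed unit vector on every facet makes the left side of order $Nh^{d/2}\sim h^{-d/2}$ while the right side is of order $(Nh^{d})^{1/2}\sim 1$. The statement is evidently a misprint for the $\ell^2$-aggregated form $\big(\sum_K\|\mathbf{\Lambda}_h\mathbf{v}-\mathbf{v}^\circ\|_K^2\big)^{1/2}\le C\big(\sum_K\sum_{e\in\partial K}h\|\mathbf{v}^\partial-\mathbf{v}^\circ\|_e^2\big)^{1/2}$, which is precisely what your local bound gives after squaring and summing, and is also the form actually invoked later in the paper, cf.\ \eqref{lem3-R2} and \eqref{thm:modified_scheme_err2-15}. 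So your argument is sound provided you state explicitly that you prove the $\ell^2$ version; as a review point, you should flag that the $\ell^1$ form in the lemma is unprovable as written.
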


\begin{proof}
These results can be found in \cite[Lemma 2]{WANG202290} for (\ref{trace1}),
\cite{LiuSISC2018} for (\ref{trace2}),
\cite[Lemma 3]{WANG202290} for (\ref{Eqn_BndDscrpDWG}),
\cite[Lemma 4]{WANG202290} for (\ref{lifting_property}).
\end{proof}

\begin{lem}\label{inf_sup}
There exists a constant $ \beta>0 $ independent of $ \mu, h $ such that 
\begin{align}
   \sup_{\mathbf{v}\in \mathbf{V}_h^0, 
    |\hspace{-.02in}|\hspace{-.02in}| \mathbf{v} |\hspace{-.02in}|\hspace{-.02in}| \neq 0} 
  \frac{\V{v}^T(B^{\circ})\V{p}}
      {|\hspace{-.02in}|\hspace{-.02in}| \mathbf{v} |\hspace{-.02in}|\hspace{-.02in}|} 
    \geq \beta \|\V{p}\|, 
  \qquad \forall p \in W_h. 
\end{align}
\end{lem}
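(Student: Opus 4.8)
The plan is to establish the bound by the classical Fortin argument: for a given pressure $p$ I will construct an explicit admissible velocity $\V{v}\in\V{V}_h^0$ that makes the numerator $\V{v}^T(B^{\circ})\V{p}$, which equals $\sum_{K\in\mathcal{T}_h}(\nabla_w\cdot\V{v},p^\circ)_K$, as large as $\|p\|^2$ while keeping $\|\nabla_w\V{v}\|$ bounded by $C\|p\|$. First note that this numerator vanishes whenever $p$ is a global constant: for $\V{v}\in\V{V}_h^0$ the boundary-facet contributions drop out by \eqref{wk_div2} and the interior-facet contributions cancel in pairs. Hence the estimate can only hold with $\|\V{p}\|$ understood modulo constants, so I take $p\in W_h\cap L^2_0(\Omega)$ without loss of generality. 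For such $p$ the continuous surjectivity of the divergence from $H^1_0(\Omega)^d$ onto $L^2_0(\Omega)$ supplies a field $\V{w}\in H^1_0(\Omega)^d$ with $\nabla\cdot\V{w}=p$ and $\|\V{w}\|_{H^1}\le C\|p\|$. I then set $\V{v}=Q_h\V{w}$; since $\V{w}$ has vanishing trace, $Q_h^\partial\V{w}$ vanishes on every boundary facet, so $\V{v}\in\V{V}_h^0$ as required.

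The heart of the argument is two commuting relations for the $L^2$ projection $Q_h$, both of which hinge on the fact that functions in $RT_0(K)$ have constant normal trace on each facet and constant divergence on $K$, matching the ranges of $Q_h^\partial$ and $Q_h^\circ$. For the divergence, testing \eqref{wk_div1} with a constant $w$ and using that $\V{n}$ is constant on each flat facet gives $\int_e(Q_h^\partial\V{w})\cdot\V{n}=\int_e\V{w}\cdot\V{n}$, whence $(\nabla_w\cdot Q_h\V{w},1)_K=\int_{\partial K}\V{w}\cdot\V{n}=\int_K\nabla\cdot\V{w}$; since $\nabla_w\cdot Q_h\V{w}$ is piecewise constant, this yields $\nabla_w\cdot Q_h\V{w}=Q_h^\circ(\nabla\cdot\V{w})=p$, so the numerator is exactly $\sum_K(\nabla_w\cdot\V{v},p^\circ)_K=(p,p)=\|p\|^2$. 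For the gradient, inserting $\V{\chi}\in RT_0(K)$ into \eqref{weak-grad-1} for each component of $Q_h\V{w}$ and again using the constancy of $\V{\chi}\cdot\V{n}$ on facets and of $\nabla\cdot\V{\chi}$ on $K$, both projection errors integrate to zero and integration by parts gives $(\nabla_w Q_h\V{w},\V{\chi})_K=(\nabla\V{w},\V{\chi})_K$; that is, $\nabla_w Q_h\V{w}$ is the $L^2$ projection of $\nabla\V{w}$ onto $RT_0(K)$, and therefore $\|\nabla_w\V{v}\|\le\|\nabla\V{w}\|\le\|\V{w}\|_{H^1}\le C\|p\|$.

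Combining the two estimates, the candidate $\V{v}$ yields a ratio bounded below by $\|p\|^2/(C\|p\|)=\|p\|/C$, so taking the supremum over $\V{v}\in\V{V}_h^0$ proves the claim with $\beta=1/C$ independent of $h$ and $\mu$ (the latter because neither $\V{w}$ nor the commuting identities involve $\mu$). I expect the main obstacle to be the clean verification of the two commuting relations, in particular tracking the facet-by-facet cancellation of the projection errors and confirming that the boundary data of $Q_h\V{w}$ vanish so that $\V{v}$ is genuinely admissible; everything else is a routine application of the continuous inf-sup condition and the stability bound $\|\V{w}\|_{H^1}\le C\|p\|$. As an alternative one could avoid constructing $\V{w}$ explicitly and build a Fortin operator $\Pi_h:H^1_0(\Omega)^d\to\V{V}_h^0$ with the same two properties, but for this lowest-order pair the projection $Q_h$ already serves that role.
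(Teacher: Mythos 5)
Your proof is correct. Note first that the paper never proves this lemma internally: its ``proof'' is a citation to \cite[Lemma~7]{WANG202290}, and what you have written is precisely the standard Fortin-type argument behind that kind of result — solve $\nabla\cdot\V{w}=p$ with $\V{w}\in H^1_0(\Omega)^d$ and $\|\V{w}\|_{H^1}\le C\|p\|$, take $\V{v}=Q_h\V{w}$, and use the two WG commuting identities to get $\nabla_w\cdot\V{v}=Q_h^\circ(\nabla\cdot\V{w})=p$ and $\nabla_w Q_h\V{w}=\mathcal{Q}_h\nabla\V{w}$, hence $\|\nabla_w\V{v}\|\le\|\nabla\V{w}\|\le C\|p\|$. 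Both commuting identities are in fact quoted and used later in the paper itself (proof of Lemma~\ref{errorequations}), and your derivations of them from \eqref{weak-grad-1} and \eqref{wk_div1}, resting on the constancy of $\V{\chi}\cdot\V{n}$ on each facet and of $\nabla\cdot\V{\chi}$ on $K$ for $\V{\chi}\in RT_0(K)$, are sound. Your preliminary observation about constants is also correct and is a necessary clarification rather than a mere convenience: since $\sum_K(\nabla_w\cdot\V{v},c)_K=0$ for every $\V{v}\in\V{V}_h^0$ and every constant $c$ (interior-facet contributions cancel in pairs, boundary ones vanish), the inequality as literally stated fails for nonzero constant $p$, so it must be read on $W_h\cap L^2_0(\Omega)$ (equivalently, with $\|\V{p}\|$ a quotient norm modulo constants); this is exactly the sense in which the paper uses $\beta$ later, e.g.\ in Lemma~\ref{lem:eigen_SS}, where $\beta^2$ is identified with the smallest \emph{positive} eigenvalue of $\hat{S}^{-1}S$. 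Two trivial loose ends remain: your candidate $\V{v}$ must be admissible in the supremum, i.e.\ $\|\nabla_w\V{v}\|\neq 0$, which follows since $\|\nabla_w\cdot\V{v}\|^2\le d\,\|\nabla_w\V{v}\|^2$ (the inequality invoked in Lemma~\ref{lem:S-bound}) and $\nabla_w\cdot\V{v}=p\neq 0$; and, as you note, neither quasi-uniformity nor $\mu$ enters the construction, so $\beta$ depends only on the shape regularity of the mesh and on $\Omega$.
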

\begin{proof}
    Proof can be found in \cite[Lemma 7]{WANG202290}.
\end{proof}

\begin{lem}\label{errorequations}
The error equations for the modified scheme (\ref{modified_scheme_matrix}) read as
\begin{equation}
\begin{cases}
\displaystyle
  \V{v}^T A \V{e}_h - \V{v}^T (B^{\circ})^T \V{e}_h^p
   = \; \mu \, \mathcal{R}_1(\mathbf{u},\mathbf{v}) + \mu \, \mathcal{R}_2(\mathbf{u},\mathbf{v}),
   \quad \forall \mathbf{v} \in \mathbf{V}_h^0,
  \\[0.08in] \displaystyle
  - \V{q}^T B^{\circ} \V{e}_h
   = \; \frac{\alpha_h}{N} \sum_{K \in \mathcal{T}_h} q |_K ,
   \quad \forall q \in W_h,    
  \label{Eqn_SchmII_ErrEqn}
\end{cases}
\end{equation}
where
\begin{equation}
\left\{
\begin{array}{l}
  \displaystyle
  \mathcal{R}_1(\V{u}, \mathbf{v})
  =  \sum_{K \in \mathcal{T}_h}
  \sum_{e \in \partial K}
   ( \mathbf{v}^\partial - \mathbf{v}^\circ, \,
      (\mathcal{Q}_h\nabla\mathbf{u} - \nabla\mathbf{u}) \mathbf{n}_e
   )_{e},
  \\ [0.20in]
  \displaystyle
  \mathcal{R}_2(\V{u},\mathbf{v})
  = \sum_{K \in \mathcal{T}_h}
    (\Delta\mathbf{u}, \, \mathbf{\Lambda}_h\mathbf{v}-\mathbf{v}^\circ)_K,
\end{array}
\right.
\label{Eqn_R1R2}
\end{equation}
and $\mathcal{Q}_h$ is $L^2$ projection from $L^2(\Omega)^{d\times d}$ onto  $RT_0(\mathcal{T}_h)^d$ space.
\end{lem}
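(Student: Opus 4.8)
The plan is to derive the two identities in (\ref{Eqn_SchmII_ErrEqn}) by inserting the $L^2$-projected exact solution $Q_h\V{u}$, $Q_h p$ into the discrete forms (\ref{A-1})--(\ref{B-1}) and subtracting the modified scheme (\ref{modified_scheme_matrix}). Two elementary commuting properties of the lowest-order spaces will do the heavy lifting. First, for any $\mathbf{w}\in RT_0(K)$ both $\mathbf{w}\cdot\V{n}|_e$ and $\nabla\cdot\mathbf{w}$ are constant, so (\ref{weak-grad-1}) together with $Q_h^\partial\V{u}|_e=\langle\V{u}\rangle_e$ gives, row by row, $\nabla_w(Q_h\V{u})=\mathcal{Q}_h(\nabla\V{u})$; since $\nabla_w\V{v}\in RT_0$ this lets me rewrite $\sum_K(\nabla_w Q_h\V{u},\nabla_w\V{v})_K=\sum_K(\mathcal{Q}_h\nabla\V{u},\nabla_w\V{v})_K$. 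Second, comparing (\ref{Eqn_DefLambdah}) with (\ref{wk_div1})--(\ref{wk_div2}) shows $\nabla\cdot(\mathbf{\Lambda}_h\V{v})=\nabla_w\cdot\V{v}$ on each $K$ and $(\mathbf{\Lambda}_h\V{v})\cdot\V{n}|_e=\V{v}^\partial\cdot\V{n}|_e$. I also note once that $\V{e}_h\in\V{V}_h^0$, because $\V{u}_h^\partial|_{\partial\Omega}=Q_h^\partial\V{g}=(Q_h\V{u})^\partial|_{\partial\Omega}$, so that the products $\V{v}^T A\V{e}_h$ and $\V{q}^T B^\circ\V{e}_h$ coincide with the element sums in (\ref{A-1})--(\ref{B-1}).

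I would dispatch the divergence identity first. Choosing $w=1$ in (\ref{wk_div2}) for $Q_h\V{u}$ and using the divergence theorem with $\nabla\cdot\V{u}=0$ gives $(\nabla_w\cdot Q_h\V{u},1)_K=\sum_e|e|\langle\V{u}\rangle_e^T\V{n}_e=\int_K\nabla\cdot\V{u}\,dx=0$, hence $\nabla_w\cdot(Q_h\V{u})\equiv0$ elementwise. Moving $\tilde{\V{b}}_2$ of (\ref{b2-4}) back into the bilinear form, the second row of (\ref{modified_scheme_matrix}) reads $\sum_K(\nabla_w\cdot\V{u}_h,q)_K=\frac{\alpha_h}{N}\sum_K q|_K$. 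Subtracting, and using $\nabla_w\cdot(Q_h\V{u})\equiv0$ together with $\V{e}_h\in\V{V}_h^0$, yields the second identity of (\ref{Eqn_SchmII_ErrEqn}) immediately.

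For the momentum identity I would substitute $\V{f}=-\mu\Delta\V{u}+\nabla p$ into $\sum_K(\V{f},\mathbf{\Lambda}_h\V{v})_K$ and integrate by parts on each element. Writing $\mathbf{\Lambda}_h\V{v}=\V{v}^\circ+(\mathbf{\Lambda}_h\V{v}-\V{v}^\circ)$ peels off $-\mu\mathcal{R}_2$ from the Laplacian term (cf.\ (\ref{Eqn_R1R2})), while the remaining piece $-\mu\sum_K(\Delta\V{u},\V{v}^\circ)_K$ becomes, via the divergence theorem, $-\mu\sum_K\sum_e(\V{v}^\circ,(\nabla\V{u})\V{n}_e)_e$. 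For the pressure term, the two $\mathbf{\Lambda}_h$-identities convert $\sum_K(\nabla p,\mathbf{\Lambda}_h\V{v})_K$ into $-(Q_h p,\nabla_w\cdot\V{v})+\sum_K\sum_e(\V{v}^\partial,p\,\V{n}_e)_e$. On the other hand, expanding $\mu\sum_K(\mathcal{Q}_h\nabla\V{u},\nabla_w\V{v})_K$ through (\ref{weak-grad-1}) and collapsing the interior term by the divergence theorem gives $\mu\sum_K\sum_e(\V{v}^\partial-\V{v}^\circ,(\mathcal{Q}_h\nabla\V{u})\V{n}_e)_e$. Matching the two sides and recognizing the facet difference $\mathcal{Q}_h\nabla\V{u}-\nabla\V{u}$ as the integrand of $\mathcal{R}_1$ reduces everything to the single requirement $\sum_K\sum_e(\V{v}^\partial,\,p\,\V{n}_e-\mu(\nabla\V{u})\V{n}_e)_e=0$. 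Granting this, I obtain $\mu\sum_K(\nabla_w Q_h\V{u},\nabla_w\V{v})_K-(Q_h p,\nabla_w\cdot\V{v})=\sum_K(\V{f},\mathbf{\Lambda}_h\V{v})_K+\mu\mathcal{R}_1+\mu\mathcal{R}_2$, and subtracting the first row of (\ref{modified_scheme_matrix}) leaves the first identity of (\ref{Eqn_SchmII_ErrEqn}).

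The main obstacle is the facet bookkeeping culminating in the cancellation $\sum_K\sum_e(\V{v}^\partial,\,p\,\V{n}_e-\mu(\nabla\V{u})\V{n}_e)_e=0$. This is exactly where the regularity $\V{u}\in H^2(\Omega)^d$ and $p\in H^1(\Omega)$ enters: the traces of $\nabla\V{u}$ and $p$ are single-valued across each interior facet, so the two elemental contributions---carrying opposite outward normals---cancel, while on each boundary facet $\V{v}^\partial=0$ because $\V{v}\in\V{V}_h^0$. A secondary point that must be handled with care is that $\mathcal{Q}_h$ is the $L^2$ projection onto $RT_0(\mathcal{T}_h)^d$, not the canonical Raviart--Thomas interpolant, so the facet normal fluxes of $\mathcal{Q}_h\nabla\V{u}$ genuinely differ from those of $\nabla\V{u}$; this discrepancy is precisely what is retained in $\mathcal{R}_1$ and may not be discarded.
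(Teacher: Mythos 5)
Your proposal is correct and follows essentially the same route as the paper's proof: substitute $Q_h\V{u}$, $Q_h p$ and subtract, use the WG commuting identities $\nabla_w Q_h\V{u}=\mathcal{Q}_h\nabla\V{u}$ and $\nabla_w\cdot Q_h\V{u}=Q_h(\nabla\cdot\V{u})=0$, exploit the lifting properties $\mathbf{\Lambda}_h\V{v}\cdot\V{n}|_e=\V{v}^\partial\cdot\V{n}|_e$ and $\nabla\cdot\mathbf{\Lambda}_h\V{v}=\nabla_w\cdot\V{v}$, and integrate by parts elementwise, with the facet terms killed by single-valuedness of the traces of $\mu\nabla\V{u}-pI$ and the vanishing of $\V{v}^\partial$ on $\partial\Omega$. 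The only differences are organizational: you consolidate the two facet cancellations (which the paper performs separately in its treatments of the $\Delta\V{u}$ and $\nabla p$ terms) into one explicit identity, and you derive the divergence commuting property directly instead of citing it.
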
    
\begin{proof}
Using the discrete errors defined in \eqref{Eqn_DiscErrs}, we substitute
\[
\V{u}_h = Q_h \V{u} -\V{e}_h , \quad p_h = Q_h p - e_h^p 
\]
into the modified scheme \eqref{modified_scheme_matrix} and obtain
\begin{align}
\label{scheme_errII_1}
\begin{cases}
    \displaystyle
&    \mu \sum\limits_{K\in\mathcal{T}_h} (\nabla_w \mathbf{e}_h,\nabla_w \mathbf{v})_K 
    -\sum\limits_{K \in \mathcal{T}_h}(e_h^p, \nabla_w\cdot\mathbf{v})_K
    \displaystyle =  -\sum\limits_{K \in \mathcal{T}_h} (\mathbf{f}, \mathbf{\Lambda}_h\mathbf{v})_K 
    \\
    & \qquad \qquad \qquad \displaystyle +\; \mu \sum_{K\in\mathcal{T}_h} (\nabla_w {Q}_h \V{u},\nabla_w \mathbf{v})_K 
     - \sum\limits_{K \in \mathcal{T}_h}(Q_h p, \nabla_w\cdot\mathbf{v})_K,
      \quad \forall \mathbf{v} \in \mathbf{V}_h^0,
    \\[0.3in]
    \displaystyle
&    -\sum\limits_{K \in \mathcal{T}_h}(\nabla_w\cdot\mathbf{e}_h,q^{\circ})_K
     =  -\frac{\alpha_h}{N} \sum\limits_{K \in \mathcal{T}_h} q |_K 
     - \sum\limits_{K \in \mathcal{T}_h} (\nabla_w \cdot ({Q}_h \V{u}), q)_K,
   \quad \forall q \in W_h .
\end{cases}
\end{align}
Next, we estimate the terms on the right-hand sides. For the first term on the right-hand side of the first equation of
(\ref{scheme_errII_1}), by testing the first equation of \eqref{Eqn_StokesBVP} by $\mathbf{\Lambda}_h\mathbf{v}$ we obtain
\begin{align}
   \sum_{K \in \mathcal{T}_h} (\V{f}, \mathbf{\Lambda}_h\mathbf{v})_K & = \sum_{K \in \mathcal{T}_h} -\mu (\Delta \V{u}, \mathbf{\Lambda}_h\mathbf{v})_K + \sum_{K \in \mathcal{T}_h}  (\nabla p, \mathbf{\Lambda}_h\mathbf{v})_K \notag \\
   & = -\sum_{K \in \mathcal{T}_h} \mu (\Delta \V{u}, \mathbf{\Lambda}_h\mathbf{v} -\V{v}^{\circ})_K - \sum_{K \in \mathcal{T}_h} \mu (\Delta \V{u}, \V{v}^{\circ})_K + \sum_{K \in \mathcal{T}_h} (\nabla p, \mathbf{\Lambda}_h\mathbf{v})_K .
   \label{W_Lift_1}
\end{align}
For the second term on the right-hand side of the above equation,
using the divergence theorem and $\nabla \V{v}^{\circ} = 0$ we have
\begin{align}
   -\sum_{K \in \mathcal{T}_h} \mu (\Delta \V{u}, \V{v}^{\circ})_K & = \sum_{K \in \mathcal{T}_h} \mu (\nabla \V{u}, \nabla \V{v}^{\circ})_K -\sum_{e \in  \partial K} \mu (\nabla \V{u} \cdot \V{n}, \V{v}^{\circ})_e 
     =  \sum_{e \in \partial K} \mu (\nabla \V{u} \cdot \V{n}, \V{v}^{\partial} 
- \V{v}^{\circ})_e.
      \label{W_Lift_2}
\end{align}
For the last term on the right-hand side of (\ref{W_Lift_1}), using the divergence theorem,
the definitions of the lifting and discrete divergence operators, and the fact $\nabla w = 0$ for $w \in P_0(K)$, we have
\begin{align*}
    (\nabla \cdot \Lambda_h(\mathbf{v}), w)_K & = \sum_{e \in \partial K}(\Lambda_h(\mathbf{v}) \cdot \V{n}, w)_e - (\Lambda_h(\mathbf{v}), \nabla w)_K  \notag
    \\
    & =\sum_{e \in \partial K} (\Lambda_h(\mathbf{v}) \cdot \V{n}, w)_e \notag 
    = \sum_{e \in \partial K}(\mathbf{v}\cdot \mathbf{n}, w)_e \notag \\
    & = (\nabla_w \cdot \V{v}, w)_K + (\V{v}, \nabla w)_K 
     = (\nabla_w \cdot \V{v}, w)_K ,
\end{align*}
or
\begin{align}
    \label{lifting_op_2}
    (\nabla \cdot \Lambda_h(\mathbf{v}), w)_K = (\nabla_w \cdot \V{v}, w)_K, \quad \forall w \in P_0(K), \;
    \forall \mathbf{v} \in \mathbf{V}_h .
\end{align}
From this and the divergence theorem, definition of projection $Q_h$, and normal continuity of $\mathbf{\Lambda}_h\mathbf{v}$, 
we obtain
\begin{align}
    \sum_{K \in \mathcal{T}_h}  (\nabla p, \mathbf{\Lambda}_h\mathbf{v})_K & = 
    \sum_{e \in \partial K} (p,\mathbf{\Lambda}_h\mathbf{v} \cdot \V{n})_e - \sum_{K \in \mathcal{T}_h} (p,\nabla \cdot \mathbf{\Lambda}_h\mathbf{v})_K\notag \\
    & = \sum_{e \in \partial K} (p,\mathbf{\Lambda}_h\mathbf{v} \cdot \V{n})_e- \sum_{K \in \mathcal{T}_h} (Q_hp,\nabla \cdot \mathbf{\Lambda}_h\mathbf{v})_K\notag \\
    & = \sum_{e \in \partial K} (p,\mathbf{\Lambda}_h\mathbf{v} \cdot \V{n})_e - \sum_{K \in \mathcal{T}_h} (Q_hp,\nabla_w \cdot \mathbf{v})_K \notag \\
    & = - \sum_{K \in \mathcal{T}_h} (Q_hp,\nabla_w \cdot \mathbf{v})_K .
     \label{W_Lift_3}
\end{align}
Inserting \eqref{W_Lift_2} and \eqref{W_Lift_3} into \eqref{W_Lift_1}, we obtain
\begin{align}
    \sum_{K \in \mathcal{T}_h} (\V{f}, \mathbf{\Lambda}_h\mathbf{v})_K
   & = -\sum_{K \in \mathcal{T}_h} \mu (\Delta \V{u}, \mathbf{\Lambda}_h\mathbf{v} -\V{v}^{\circ})_K
   +\sum_{e \in \partial K} \mu (\nabla \V{u} \cdot \V{n}, \V{v}^{\partial} 
- \V{v}^{\circ})_e
- \sum_{K \in \mathcal{T}_h} (Q_hp,\nabla_w \cdot \mathbf{v})_K .
   \label{W_Lift_4}
\end{align}

For the second term on the right-hand side of the first equation of (\ref{Eqn_SchmII_ErrEqn})
using the WG commuting identity
\[
(\nabla_w {Q}_h \V{u},\V{w})_K = (\mathcal{Q}_h \nabla \V{u},\V{w})_K, \quad \forall \V{w} \in RT_0(K)^{d},
\]
the definition of the discrete weak gradient operator, the divergence theorem, 
and the fact $\nabla \V{v}^{\circ} = 0$, we have
\begin{align}
    \mu \sum_{K\in\mathcal{T}_h} (\nabla_w {Q}_h \V{u},\nabla_w \mathbf{v})_K & = \mu \sum_{K\in\mathcal{T}_h} (\mathcal{Q}_h \nabla \V{u},\nabla_w \mathbf{v})_K \notag \\
    & = \sum_{e \in \partial K} \mu (\mathcal{Q}_h \nabla \V{u} \cdot \V{n}, \V{v}^{\partial})_e -  \sum_{K\in\mathcal{T}_h} \mu (\nabla \cdot (\mathcal{Q}_h \nabla \V{u}), \mathbf{v}^{\circ})_K \notag \\
    & = \sum_{e \in \partial K} \mu (\mathcal{Q}_h \nabla \V{u} \cdot \V{n}, \V{v}^{\partial})_e 
    + \sum_{K\in\mathcal{T}_h}  \mu (\mathcal{Q}_h \nabla \V{u}, \nabla \V{v}^{\circ})_K 
    \notag \\
    & \qquad - \sum_{e \in \partial K} (\mathcal{Q}_h \nabla \V{u} \cdot \V{n}, \V{v}^{\circ})_e \notag \\
    & = \sum_{e \in \partial K} \mu (\mathcal{Q}_h \nabla \V{u} \cdot \V{n}, \V{v}^{\partial} - \mathbf{v}^{\circ})_e  .
    \label{W_Lift_5}
\end{align}
Substituting \eqref{W_Lift_4} and \eqref{W_Lift_5} into \eqref{scheme_errII_1}, we obtain
(\ref{Eqn_SchmII_ErrEqn}).


Now we estimate the second term on the right-hand side of the second equation of \eqref{scheme_errII_1}.
Using the fact $\nabla \cdot \V{u} = 0$ and the commuting identity of WG
\[
(\nabla_w \cdot ({Q}_h \V{u}),w)_K = ({Q}_h (\nabla \cdot \V{u}),w)_K, \quad \forall w \in P_0(K),
\]
we have
    \begin{align*}
        (\nabla_w \cdot (Q_h \V{u}), q)_K = ({Q}_h(\nabla \cdot \V{u}),q)_K = (\nabla \cdot \V{u},q)_K = 0 .
    \end{align*}
Inserting this into \eqref{scheme_errII_1} gives the second error equation in \eqref{Eqn_SchmII_ErrEqn}.
\end{proof}

\begin{lem}
\label{lemma31}
Let $ \mathbf{u} \in H^{2}(\Omega)^d $ and $ p \in H^1(\Omega) $ be the solutions of the Stokes problem and 
$\mathbf{u}_h\in \mathbf{V}_h$ and $p_h\in W_h$ be the numerical solutions of the modified scheme (\ref{modified_scheme_matrix}).
Then,
\begin{align}
  |\hspace{-.02in}|\hspace{-.02in}|  Q_h\mathbf{u}-\mathbf{u}_h |\hspace{-.02in}|\hspace{-.02in}|
  \leq C \Big( h \|\mathbf{u}\|_{2} + |\alpha_h|  \Big),
  \label{pro33}
  \\
  \|Q_hp-p_h\| \leq C \mu \Big( h \|\mathbf{u}\|_{2} + |\alpha_h| \Big),
  \label{pro34}
\end{align}
where $ C>0 $ is a constant independent of $ h $ and $ \mu $.
\end{lem}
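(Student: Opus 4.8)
The plan is to follow the classical Brezzi strategy for saddle-point problems: estimate the velocity error $\V{e}_h$ in the energy norm $|\hspace{-.02in}|\hspace{-.02in}|\V{v}|\hspace{-.02in}|\hspace{-.02in}|=\|\nabla_w\V{v}\|$ by an energy argument built on the momentum error equation, and then estimate the pressure error $e_h^p$ through the inf-sup condition of Lemma~\ref{inf_sup}. Two preliminary bounds would be recorded first. The consistency residuals in \eqref{Eqn_R1R2} satisfy
\[
|\mathcal{R}_1(\V{u},\V{v})|+|\mathcal{R}_2(\V{u},\V{v})|\le C h\|\V{u}\|_2\,\|\nabla_w\V{v}\|,\qquad\forall\V{v}\in\V{V}_h^0 .
\]
For $\mathcal{R}_1$ I would use Cauchy--Schwarz over facets, bound the factor $(\sum_K\sum_e h^{-1}\|\V{v}^\partial-\V{v}^\circ\|_e^2)^{1/2}$ by $\|\nabla_w\V{v}\|$ via \eqref{Eqn_BndDscrpDWG}, and control $(\sum_K\sum_e h\|(\mathcal{Q}_h\nabla\V{u}-\nabla\V{u})\V{n}_e\|_e^2)^{1/2}$ by $Ch\|\V{u}\|_2$ using the trace inequality \eqref{trace1} and the approximation property of $\mathcal{Q}_h$; for $\mathcal{R}_2$ I would combine Cauchy--Schwarz with the lifting bound \eqref{lifting_property} and again \eqref{Eqn_BndDscrpDWG}. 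Next, taking $q=\mathbf{1}_K$ in the second equation of \eqref{Eqn_SchmII_ErrEqn} shows that $\nabla_w\cdot\V{e}_h$ is the element-wise constant $-\alpha_h/(N|K|)$, whence, by quasi-uniformity,
\[
\|\nabla_w\cdot\V{e}_h\|^2=\frac{\alpha_h^2}{N^2}\sum_{K\in\mathcal{T}_h}\frac1{|K|}\le C\alpha_h^2,
\]
so $\|\nabla_w\cdot\V{e}_h\|\le C|\alpha_h|$.

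For \eqref{pro33} I would test the momentum error equation with a member of $\V{V}_h^0$ close to $\V{e}_h$. Because $\V{e}_h\notin\V{V}_h^0$ in general---its boundary-facet values equal the boundary-data approximation error $\langle\V{g}\rangle_e-Q_h^\partial\V{g}|_e$---I would introduce a discrete lifting $\V{w}_h\in\V{V}_h$ that (i) carries the boundary-facet values of $\V{e}_h$ and (ii) reproduces its weak divergence, $\nabla_w\cdot\V{w}_h=\nabla_w\cdot\V{e}_h$, so that $\V{e}_h^0:=\V{e}_h-\V{w}_h\in\V{V}_h^0$ is discretely divergence-free. Testing the momentum equation with the admissible, divergence-free $\V{e}_h^0$ then annihilates the pressure term, giving
\[
\mu\|\nabla_w\V{e}_h\|^2=\mu(\nabla_w\V{e}_h,\nabla_w\V{w}_h)+\mu\big(\mathcal{R}_1+\mathcal{R}_2\big)(\V{e}_h^0).
\]
Using the consistency bound on $\V{e}_h^0$, the Cauchy--Schwarz inequality, $\|\nabla_w\V{e}_h^0\|\le\|\nabla_w\V{e}_h\|+\|\nabla_w\V{w}_h\|$, and Young's inequality, I would absorb $\|\nabla_w\V{e}_h\|$ and conclude $\|\nabla_w\V{e}_h\|\le C(h\|\V{u}\|_2+\|\nabla_w\V{w}_h\|)$; the estimate \eqref{pro33} then follows provided $\|\nabla_w\V{w}_h\|\le C(|\alpha_h|+h\|\V{u}\|_2)$.

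Finally, for \eqref{pro34} I would apply Lemma~\ref{inf_sup} to $e_h^p$ and rewrite the numerator through the momentum error equation: for every $\V{v}\in\V{V}_h^0$,
\[
(e_h^p,\nabla_w\cdot\V{v})=\mu(\nabla_w\V{e}_h,\nabla_w\V{v})-\mu(\mathcal{R}_1+\mathcal{R}_2)(\V{v})\le\big(\mu\|\nabla_w\V{e}_h\|+C\mu h\|\V{u}\|_2\big)\|\nabla_w\V{v}\|,
\]
so that $\beta\|e_h^p\|\le\mu\|\nabla_w\V{e}_h\|+C\mu h\|\V{u}\|_2$; inserting \eqref{pro33} produces the factor $\mu$ and yields \eqref{pro34}. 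The main obstacle is the velocity step, and concretely the construction and estimation of the lifting $\V{w}_h$: one must simultaneously match the boundary-facet data of $\V{e}_h$ and its nonzero-mean weak divergence while keeping $\|\nabla_w\V{w}_h\|\le C(|\alpha_h|+h\|\V{u}\|_2)$. This is where the inf-sup stability (to realize the prescribed divergence with controlled energy), the trace and lifting inequalities of Lemma~\ref{lem1}, quasi-uniformity, and the explicit form $\nabla_w\cdot\V{e}_h=-\alpha_h/(N|K|)$ must be used together; by contrast, the bounds on the consistency residuals and the inf-sup pressure estimate are comparatively routine.
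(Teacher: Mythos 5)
Your bounds for $\mathcal{R}_1$, $\mathcal{R}_2$ and your inf-sup step for the pressure coincide with the paper's (they are \eqref{lem3-R1}, \eqref{lem3-R2}, and \eqref{pro32}), but your velocity estimate deviates from the paper's proof exactly at the point you yourself flag as ``the main obstacle,'' and that obstacle is a genuine gap, not a routine detail. The paper never constructs a divergence-matching or boundary-lifting field. In its matrix--vector convention the discrete error $\V{e}_h$ is the vector of interior degrees of freedom of $Q_h\V{u}-\V{u}_h$, hence an admissible test vector, and the proof simply takes $\V{v}=\V{e}_h$, $q=e_h^p$ in \eqref{Eqn_SchmII_ErrEqn}. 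The pressure-coupling term is not annihilated but kept and converted by the second error equation into $\V{e}_h^T(B^{\circ})^T\V{e}_h^p=-\frac{\alpha_h}{N}\sum_{K}e_h^p|_K$, which quasi-uniformity and Cauchy--Schwarz bound by $C|\alpha_h|\,\|e_h^p\|$. This yields $\mu\|\nabla_w\V{e}_h\|^2\le C h\mu\|\V{u}\|_2\|\nabla_w\V{e}_h\|+C|\alpha_h|\,\|e_h^p\|$; feeding in the inf-sup bound $\beta\|e_h^p\|\le\mu\|\nabla_w\V{e}_h\|+C h\mu\|\V{u}\|_2$ produces a quadratic inequality in $\|\nabla_w\V{e}_h\|$ whose solution (quadratic formula plus Young's inequality) is \eqref{pro33}, and \eqref{pro34} follows. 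So the velocity and pressure estimates are closed \emph{together}; the coupling you tried to eliminate is precisely what makes the argument work.

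The missing construction in your proposal is moreover unlikely to exist with the bound you need. Since $\nabla_w\cdot\V{e}_h=-\alpha_h/(N|K|)$ has integral $-\alpha_h\neq 0$ while $B^{\circ}$ maps $\V{V}_h^0$ into mean-zero functions, any $\V{w}_h$ matching both the divergence and the boundary data must carry nontrivial boundary facet values, namely the individual errors $\langle\V{g}\rangle_e-Q_h^{\partial}\V{g}|_e$. The inf-sup condition controls only the mean-zero divergence correction; the boundary lifting must be estimated by facet-wise quantities such as $\big(\sum_{e\subset\partial\Omega}h^{-1}\|\langle\V{g}\rangle_e-Q_h^{\partial}\V{g}|_e\|_e^2\big)^{1/2}$. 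But $\alpha_h$ in \eqref{alpha} is a single \emph{signed aggregate} of these facet errors: they can be $O(1)$ with $\alpha_h=0$ by cancellation, so no bound of the form $\|\nabla_w\V{w}_h\|\le C(|\alpha_h|+h\|\V{u}\|_2)$ can hold in general. The paper's proof succeeds precisely because $\alpha_h$ enters its argument only through the aggregate functional $q\mapsto\frac{\alpha_h}{N}\sum_K q|_K$, never through facet-wise boundary quantities. To complete your proof you should discard the lifting and revert to the paper's coupled velocity--pressure argument.
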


\begin{proof}
By the Cauchy-Schwarz inequality and trace inequalities (\eqref{trace1}, \eqref{Eqn_BndDscrpDWG}, and
\eqref{lifting_property} in Lemma~\ref{lem1}), we have
\begin{align}
    | \mathcal{R}_1 (\V{u}, \V{e}_h) |
    &= | \sum_{K \in \mathcal{T}_h}
  \sum_{e \in \partial K}
   (\V{e}_h^{\partial} - \V{e}_h^{\circ}, \,
      (\mathcal{Q}_h\nabla\mathbf{u} - \nabla\mathbf{u}) \mathbf{n}_e
   )_{e} | \nonumber
   \\ 
   & \le
    (\sum_{K \in \mathcal{T}_h} \sum_{e \in \partial K} h^{-1} \| \V{e}_h^{\partial} - \V{e}_h^{\circ} \|_e^2)^{1/2} (\sum_{K \in \mathcal{T}_h} \sum_{e \in \partial K}  h \| (\mathcal{Q}_h\nabla\mathbf{u} - \nabla\mathbf{u}) \V{n}_e\|_e^2)^{1/2} \nonumber
   \\
   & \le C h \| \V{u} \|_2  |\hspace{-.02in}|\hspace{-.02in}|\V{e}_h|\hspace{-.02in}|\hspace{-.02in}| ,
   \label{lem3-R1}
\end{align}
and
\begin{align}
  |\mathcal{R}_2 (\V{u},\V{e}_h) |
    &= |\sum_{K \in \mathcal{T}_h}
   (\Delta \V{u}, \V{\Lambda}_h \V{e}_h - \V{e}_h^{\circ}) |
\le C  \|\V{u}\|_2 (\sum_{K \in \mathcal{T}_h}  \| \V{\Lambda}_h \V{e}_h - \V{e}_h^{\circ} \|^2)^{1/2}
   \notag \\
   & \le C h  \| \V{u} \|_2  |\hspace{-.02in}|\hspace{-.02in}|\V{e}_h|\hspace{-.02in}|\hspace{-.02in}|.
    \label{lem3-R2}
\end{align}
Taking $\V{v} = \V{e}_h$ and $\V{q} = \V{e}_h^p$ in the error equations \eqref{Eqn_SchmII_ErrEqn}, we have
    \begin{align}
    \displaystyle
    \nonumber
        \mu |\hspace{-.02in}|\hspace{-.02in}|\V{e}_h|\hspace{-.02in}|\hspace{-.02in}|^2 
        & = \V{e}_h^T A \V{e}_h 
        = 
        \mu  \mathcal{R}_1(\V{u}, \V{e}_h) + \mu  \mathcal{R}_2(\V{u}, \V{e}_h)
        + \V{e}_h^T (B^{\circ})^T \V{e}_h^p 
        \\ 
       & \le C h \mu \| \V{{u}} \|_2 |\hspace{-.02in}|\hspace{-.02in}|\V{e}_h|\hspace{-.02in}|\hspace{-.02in}| + \frac{|\alpha_h| }{N} \sum_{K \in \mathcal{T}_h} | e_h^p |_K |
       \notag \\
       & \le C h \mu \| \V{{u}} \|_2 |\hspace{-.02in}|\hspace{-.02in}|\V{e}_h|\hspace{-.02in}|\hspace{-.02in}| + C |\alpha_h| \sum_{K \in \mathcal{T}_h} |K| \, | e_h^p |_K |
       \notag \\
       & \le C h \mu \| \V{{u}} \|_2 |\hspace{-.02in}|\hspace{-.02in}|\V{e}_h|\hspace{-.02in}|\hspace{-.02in}| + C |\alpha_h|\, \| e_h^p\| ,
       \label{pro31}
    \end{align}
    where we have used the assumption that the mesh is quasi-uniform. From 
    the inf-sup condition Lemma~\ref{inf_sup}, we get
\begin{align}
\nonumber
    \beta \| \V{e}_h^p \| 
    &\le \sup_{\V{v} \in \V{V}_h^{\circ},\; |\hspace{-.02in}|\hspace{-.02in}|\V{v}|\hspace{-.02in}|\hspace{-.02in}| \neq 0} \frac{\V{v}^T (B^{\circ})^T \V{e}_h^p}{ |\hspace{-.02in}|\hspace{-.02in}|\V{v}|\hspace{-.02in}|\hspace{-.02in}|} \\ \nonumber
    &= \sup_{\V{v} \in \V{V}_h^{\circ},\; |\hspace{-.02in}|\hspace{-.02in}|\V{v}|\hspace{-.02in}|\hspace{-.02in}| \neq 0} \frac{ \V{v}^T A \V{e}_h  - \mu  \mathcal{R}_1(\V{u},\V{v}) -\mu  \mathcal{R}_2(\V{u},\V{v})}{|\hspace{-.02in}|\hspace{-.02in}|\V{v}|\hspace{-.02in}|\hspace{-.02in}|}
    \\ 
    &= \mu |\hspace{-.02in}|\hspace{-.02in}|\V{e}_h|\hspace{-.02in}|\hspace{-.02in}|  + C h \mu \| \V{u} \|_2 .
     \label{pro32}
\end{align}
Combining \eqref{pro31} and \eqref{pro32}, we have
\begin{align}
\nonumber
    \mu |\hspace{-.02in}|\hspace{-.02in}|\V{e}_h|\hspace{-.02in}|\hspace{-.02in}|^2 
    & \le C \mu |\hspace{-.02in}|\hspace{-.02in}|\V{e}_h|\hspace{-.02in}|\hspace{-.02in}|
    (h \| \V{u} \|_2  +  |\alpha_h| )  + C |\alpha_h| h \| \V{u} \|_2 .
\end{align}
Applying the quadratic formula to the above inequality we obtain
\begin{align*}
|\hspace{-.02in}|\hspace{-.02in}|\V{e}_h|\hspace{-.02in}|\hspace{-.02in}|
\le C( h \| \V{u} \|_2 + |\alpha_h|) + C \sqrt{h |\alpha_h|\, \| \V{u} \|_2}
\le C ( h \| \V{u} \|_2 + |\alpha_h|),
\end{align*}
which gives (\ref{pro33}). The inequality (\ref{pro34}) follows from (\ref{pro33}) and \eqref{pro32}.
\end{proof}

\begin{thm}
\label{thm:modified_scheme_err}
Let $ \mathbf{u} \in H^{2}(\Omega)^d $ and $ p \in H^1(\Omega) $ be the solutions of Stokes problem and 
$\mathbf{u}_h\in \mathbf{V}_h$ and $p_h\in W_h$ be the numerical solutions of the modified scheme (\ref{modified_scheme_matrix}).
Assume $ \mathbf{f} \in L^2(\Omega)^d $.
Then,
\begin{align}
    & \| p - p_h \| \le C h \| \V{f} \| + C \mu |\alpha_h|,
    \label{thm:modified_scheme_err-1}
    \\
   &  \| \nabla \V{u} - \nabla_w \V{u}_h \| \le C h \|\V{u} \|_2 +  C |\alpha_h|,
   \label{thm:modified_scheme_err-3}
    \\
    & \|\mathbf{u} - \mathbf{u}_h\| = \|\mathbf{u} - \mathbf{u}_h^\circ\| \leq C h \|\mathbf{u}\|_{2}
    + C |\alpha_h| ,
    \label{thm:modified_scheme_err-2}
    \\
      & 
   \displaystyle \|Q^{\circ}_h \V{u} - \V{u}^{\circ}_h\|  
  \le Ch^2 \| \V{u}\|_2 + C |\alpha_h|,
   \label{thm:modified_scheme_err-2-0}
\end{align}
where $C$ is a constant independent of $h$ and $\mu$.
\end{thm}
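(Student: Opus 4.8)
The plan is to feed the discrete-error bounds of Lemma~\ref{lemma31} through the decomposition (\ref{Eqn_DiscErrs}), adding the standard $L^2$-projection estimates and the $H^2$-regularity of the Stokes problem. I begin with the energy estimate (\ref{thm:modified_scheme_err-3}): writing $\nabla\V{u} - \nabla_w\V{u}_h = (\nabla\V{u} - \nabla_w Q_h\V{u}) + \nabla_w\V{e}_h$ and invoking the commuting identity $\nabla_w Q_h\V{u} = \mathcal{Q}_h\nabla\V{u}$ already used in Lemma~\ref{errorequations}, the first piece is the projection error $\nabla\V{u} - \mathcal{Q}_h\nabla\V{u}$ controlled by $C h\|\V{u}\|_2$, while $\|\nabla_w\V{e}_h\| \le C(h\|\V{u}\|_2 + |\alpha_h|)$ by Lemma~\ref{lemma31}; the triangle inequality then gives (\ref{thm:modified_scheme_err-3}).

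For the pressure estimate (\ref{thm:modified_scheme_err-1}) I split $p - p_h = (p - Q_h p) + e_h^p$. The projection term is bounded by $C h\|p\|_1$ and $\|e_h^p\| = \|Q_h p - p_h\| \le C\mu(h\|\V{u}\|_2 + |\alpha_h|)$ by Lemma~\ref{lemma31}. The terms $h\|p\|_1$ and $\mu h\|\V{u}\|_2$ are then absorbed into $C h\|\V{f}\|$ through the Stokes regularity bound $\|p\|_1 + \mu\|\V{u}\|_2 \le C\|\V{f}\|$, exactly as in the derivation of (\ref{err-p}) for the unmodified scheme, leaving the single extra term $C\mu|\alpha_h|$.

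The main work is the superconvergence estimate (\ref{thm:modified_scheme_err-2-0}) for $\V{e}_h^\circ = Q_h^\circ\V{u} - \V{u}_h^\circ$, which I would establish by a duality (Aubin--Nitsche) argument. Let $(\V{\psi},\phi)$ solve the dual Stokes problem $-\mu\Delta\V{\psi} + \nabla\phi = \V{e}_h^\circ$, $\nabla\cdot\V{\psi} = 0$ in $\Omega$, $\V{\psi}|_{\partial\Omega} = \V{0}$, normalized by $\int_\Omega\phi = 0$, with regularity $\mu\|\V{\psi}\|_2 + \|\phi\|_1 \le C\|\V{e}_h^\circ\|$. Starting from $\|\V{e}_h^\circ\|^2 = (\V{e}_h^\circ, -\mu\Delta\V{\psi} + \nabla\phi)$, I would integrate by parts and pair the two error equations of (\ref{Eqn_SchmII_ErrEqn}) against the projection $Q_h\V{\psi}$ and against $q = Q_h^\circ\phi$, respectively, reusing the lifting-operator and commuting-identity manipulations of Lemma~\ref{errorequations} together with the identity (\ref{lifting_op_2}) and the trace and lifting bounds of Lemma~\ref{lem1}. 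The terms independent of $\alpha_h$ reproduce the bound $C h^2\|\V{u}\|_2$ already established for the unmodified scheme in (\ref{err-u0}). The one genuinely new contribution is the inhomogeneous right-hand side $\frac{\alpha_h}{N}\sum_K q|_K$ of the second error equation, which upon the choice $q = Q_h^\circ\phi$ becomes $\frac{\alpha_h}{N}\sum_K\langle\phi\rangle_K$ and is bounded, via the Cauchy--Schwarz inequality and mesh quasi-uniformity, by $C|\alpha_h|\,\|\phi\| \le C|\alpha_h|\,\|\V{e}_h^\circ\|$. Dividing $\|\V{e}_h^\circ\|^2 \le (C h^2\|\V{u}\|_2 + C|\alpha_h|)\,\|\V{e}_h^\circ\|$ by $\|\V{e}_h^\circ\|$ gives (\ref{thm:modified_scheme_err-2-0}). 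I expect the delicate point to be the bookkeeping that keeps the $\alpha_h$ term linear and at order $O(|\alpha_h|)$ rather than acquiring a stray factor of $h$ or vanishing; pressure-robustness of the scheme is what prevents $\mu$ from entering the velocity bounds.

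Finally, (\ref{thm:modified_scheme_err-2}) follows from (\ref{thm:modified_scheme_err-2-0}) and the triangle inequality $\|\V{u} - \V{u}_h^\circ\| \le \|\V{u} - Q_h^\circ\V{u}\| + \|Q_h^\circ\V{u} - \V{u}_h^\circ\|$, since $\|\V{u} - Q_h^\circ\V{u}\| \le C h\|\V{u}\|_2$ dominates the $h^2$ term while the $C|\alpha_h|$ term is carried over unchanged. The stated equality $\|\V{u} - \V{u}_h\| = \|\V{u} - \V{u}_h^\circ\|$ holds because the $L^2(\Omega)$ norm of a weak Galerkin velocity depends only on its interior component.
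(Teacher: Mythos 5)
Your proposal is correct and follows essentially the same route as the paper's proof: the same projection/discrete-error splittings combined with Lemma~\ref{lemma31} and the Stokes regularity bound for \eqref{thm:modified_scheme_err-1} and \eqref{thm:modified_scheme_err-3}, and the same Aubin--Nitsche duality argument for \eqref{thm:modified_scheme_err-2-0} --- same dual problem, same regularity $\mu\|\V{\Psi}\|_2+\|\psi\|_1\le C\|\V{e}_h^\circ\|$, same testing of the error equations \eqref{Eqn_SchmII_ErrEqn} with $Q_h\V{\Psi}$ and the projection of the dual pressure, and the same identification of the inhomogeneous term $\frac{\alpha_h}{N}\sum_K q|_K$ as the source of the $C|\alpha_h|$ contribution. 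The only imprecision is your claim that this inhomogeneous term is ``the one genuinely new contribution'': the terms you label as independent of $\alpha_h$ are estimated via $|\hspace{-.02in}|\hspace{-.02in}| \V{e}_h |\hspace{-.02in}|\hspace{-.02in}|$, whose bound in Lemma~\ref{lemma31} now itself carries $|\alpha_h|$, so secondary contributions of order $\mu\|\V{\Psi}\|_2|\alpha_h|$ and $h\|\psi\|_1|\alpha_h|$ also arise (as in the paper's estimates), but they are absorbed by the same regularity bound into $C|\alpha_h|\,\|\V{e}_h^\circ\|$ and do not affect the stated result.
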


\begin{proof}
 
It is known that the solutions of the Stokes problem have the regularity
\[
\mu \| \V{u} \|_2 + \| p\|_1 \le C \| \V{f} \| .
\]
Using this, Lemma~\ref{lemma31}, and the triangle inequality, we obtain (\ref{thm:modified_scheme_err-1}).

Let $ \mathcal{Q}_h $ be a $L^2$-projection operator from $ L^2(\Omega)^{d \times d} $ onto
broken $ RT_0(\mathcal{T}_h)^d $ space.
From the commuting identity of WG, Lemma~\ref{lemma31}, and projection properties, we have
\begin{align*}
    \|\nabla_w \V{u}_h - \nabla \V{u} \| 
    & \le 
    \| \nabla_w \V{u}_h - \mathcal{Q}_h \nabla \V{u} \| 
    +  \| \mathcal{Q}_h \nabla \V{u} - \nabla \V{u}\| 
    \le C(h \| \V{u} \|_2 + |\alpha_h|) ,
\end{align*}
which gives (\ref{thm:modified_scheme_err-3}).

Consider the dual problem 
\begin{equation}
\begin{cases}
  \displaystyle
  -\mu \Delta\mathbf{\Psi}+\nabla\psi =  \mathbf{e}_h^\circ,
    \quad \mbox{in} \;\; \Omega,
  \\[0.05in]
  \displaystyle
  \nabla\cdot\mathbf{\Psi}  = 0,
    \quad \mbox{in} \;\; \Omega,
  \\[0.05in]
  \displaystyle
  \mathbf{\Psi}  = \mathbf{0},
    \quad \mbox{on} \;\; \partial\Omega,
\label{Eqn_DualProb}
\end{cases}
\end{equation}
where $\mathbf{e}_h^\circ = Q^{\circ}_h \V{u} - \V{u}^{\circ}_h$ is the velocity discrete error.
It is known $\mathbf{\Psi} \in H^2(\Omega)^d$ and $\psi \in H^1(\Omega)$.
Taking $\mathbf{e}_h^\circ$ as the test function in the first equation of the dual problem, we have
\begin{align}
    \| \mathbf{e}_h^\circ \|^2 = - \mu \sum_{K \in \mathcal{T}_h} (\Delta\mathbf{\Psi}, \, \mathbf{e}_h^\circ)_{K} + \sum_{{K \in \mathcal{T}_h}}(\nabla\psi, \, \mathbf{e}_h^\circ)_{K}.
\label{thm:modified_scheme_err2-4}
\end{align}
We estimate the terms on the right-hand side separately.

For the second term in \eqref{thm:modified_scheme_err2-4}, using the divergence theorem and the definition of $Q_h$ and discrete weak divergence, we have
\begin{align*}
  \displaystyle
  (\nabla\psi, \mathbf{e}_h^\circ)_K
   &= ( \psi\mathbf{n}, \, \mathbf{e}_h^\circ )_{e}
    - (\psi, \nabla\cdot\mathbf{e}_h^\circ)_K
  \\ 
  \qquad
  \displaystyle
 & = ( \psi\mathbf{n}, \, \mathbf{e}_h^\circ )_{e}
    - (Q_h\psi, \nabla\cdot\mathbf{e}_h^\circ)_K
  \\ 
  \qquad
  \displaystyle
 & = ( \psi\mathbf{n}, \mathbf{e}_h^\circ )_{e}
    - ( (Q_h\psi)\mathbf{n}, \mathbf{e}_h^\circ )_{e}
    + (\nabla(Q_h\psi), \mathbf{e}_h^\circ)_K
  \\
  \qquad
  \displaystyle
  &= ( \psi\mathbf{n}, \mathbf{e}_h^\circ )_{e}
    - ( (Q_h\psi)\mathbf{n}, \mathbf{e}_h^\circ )_{e}
    + ( (Q_h\psi)\mathbf{n}, \mathbf{e}_h^\partial )_{e}
    - (Q_h\psi, \nabla_w\cdot\mathbf{e}_h)_K.
\end{align*}
Summing over $\mathcal{T}_h$ and using
$ \displaystyle
  \sum_{K \in \mathcal{T}_h} \sum_{e \in \partial K}
  ( \psi\mathbf{n}, \, \mathbf{e}_h^\partial )_{e} = 0 $,
we get
\begin{align*}
  \displaystyle
  \sum_{K \in \mathcal{T}_h} (\nabla\psi, \mathbf{e}_h^\circ)_K
  = \sum_{K \in \mathcal{T}_h}\sum_{e \in \partial K}
    ( (Q_h\psi-\psi)\mathbf{n}, \,
    \mathbf{e}_h^\partial - \mathbf{e}_h^\circ )_{e} 
    - \sum_{K \in \mathcal{T}_h}(Q_h\psi, \nabla_w\cdot\mathbf{e}_h)_K.
\end{align*}
From the Cauchy-Schwarz inequality, \eqref{trace1} and \eqref{Eqn_BndDscrpDWG} of 
Lemma~\ref{lem1}, the projection properties, Lemma~\ref{lemma31}, and
\[
\sum_{K \in \mathcal{T}_h}(Q_h\psi, \nabla_w\cdot\mathbf{e}_h)_K
= B^{\circ} (\V{e}_h,Q_h\psi)
= \frac{\alpha_h}{N}\sum_{K \in \mathcal{T}_h} Q_h\psi |_K \le C \|\psi\|_1 |\alpha_h| ,
\]
we have
\begin{align}
    \displaystyle
 \sum_{K \in \mathcal{T}_h} (\nabla\psi, \mathbf{e}_h^\circ)_K
  & = \sum_{K \in \mathcal{T}_h} \sum_{e \in \partial K}
    \langle (Q_h\psi-\psi)\mathbf{n}, \,
    \mathbf{e}_h^\partial - \mathbf{e}_h^\circ \rangle_{e}
- \sum_{K \in \mathcal{T}_h}(Q_h\psi, \nabla_w\cdot\mathbf{e}_h)_K
  \notag \\
  \displaystyle 
  &\le \left( \sum_{K \in \mathcal{T}_h} \sum_{e \in \partial K} h\|Q_h\psi-\psi \|_{e}^2 \right)^{\frac12}
  \left( \sum_{K \in \mathcal{T}_h} \sum_{e \in \partial K} h^{-1} \|\mathbf{e}_h^\partial - \mathbf{e}_h^\circ \|_{e}^2 \right)^{\frac12}
   \notag \\
  \displaystyle 
  &
  + C \|\psi\|_1 |\alpha_h|
  \notag \\
  \qquad
  \displaystyle
  &\le \left( \sum_{K \in \mathcal{T}_h} (\|Q_h\psi-\psi \|^2_{K}+h^2\|\nabla(Q_h\psi-\psi) \|_{K}^2)\right)^{\frac12}|\hspace{-.02in}|\hspace{-.02in}| \mathbf{e}_h |\hspace{-.02in}|\hspace{-.02in}|
  + C \|\psi\|_1 |\alpha_h|
  \notag \\ 
  \qquad
 &   \displaystyle \le Ch \|\psi\|_1
    |\hspace{-.02in}|\hspace{-.02in}| \mathbf{e}_h |\hspace{-.02in}|\hspace{-.02in}|
    +  C \|\psi\|_1 |\alpha_h|
\notag \\
 & \le 
 C  \|\psi\|_1 ( h^2 \|\mathbf{u}\|_2 +  |\alpha_h| ).
\label{thm:modified_scheme_err2-5}
\end{align}

Next, we focus on the first term of \eqref{thm:modified_scheme_err2-4},
$- \mu \sum_{K \in \mathcal{T}_h} (\Delta\mathbf{\Psi}, \, \mathbf{e}_h^\circ)_{K}$.
By the divergence theorem and $\sum_{K \in \mathcal{T}_h} \sum_{e \in \partial K} (\nabla \V{\Psi} \cdot \V{n}, \V{e}_h^{\partial})_e = 0$
(from the continuity of $\V{\Psi}$), there holds
\begin{align}
    -(\Delta \V{\Psi} , \V{e}_h^{\circ}) 
    & = (\nabla \V{\Psi} , \nabla \V{e}_h^{\circ}) - \sum_{K \in \mathcal{T}_h} \sum_{e\in \partial K} (\nabla \V{\Psi}  \cdot \V{n}, \V{e}_h^{\circ})_e \nonumber
    \\
    & = (\nabla \V{\Psi} , \nabla \V{e}_h^{\circ}) - \sum_{K \in \mathcal{T}_h} \sum_{e\in \partial K} (\nabla \V{\Psi}  \cdot \V{n}, \V{e}_h^{\circ} -  \V{e}_h^{\partial})_e.
    \label{thm:modified_scheme_err2-7}
\end{align}

For the first term in \eqref{thm:modified_scheme_err2-7},
by the property of $\mathcal{Q}_h $, divergence theorem, definition of $(\nabla_w)$, and property of $Q_h$, we have
\begin{align}
    (\nabla \V{\Psi} , \nabla \V{e}_h^{\circ}) 
    &= (\mathcal{Q}_h (\nabla \V{\Psi} ), \nabla \V{e}_h^{\circ}) \nonumber
    \\
    & = -(\V{e}_h^{\circ}, \nabla \cdot ((\mathcal{Q}_h (\nabla \V{\Psi})))
    + \sum_{K \in \mathcal{T}_h} \sum_{e\in \partial K}(\V{e}_h^{\circ}, (\mathcal{Q}_h (\nabla \V{\Psi} )) \cdot \V{n})_e \nonumber
    \\
    & = (\mathcal{Q}_h (\nabla \V{\Psi}), \nabla_w \V{e}_h) + \sum_{K \in \mathcal{T}_h} \sum_{e\in \partial K}(\mathcal{Q}_h (\nabla \V{\Psi}) \cdot \V{n}, \V{e}_h^{\circ} - \V{e}_h^{\partial})_e  \nonumber
    \\
    & = (\nabla_w Q_h\V{\Psi}, \nabla_w \V{e}_h) + \sum_{K \in \mathcal{T}_h} \sum_{e\in \partial K}(\mathcal{Q}_h (\nabla \V{\Psi}) \cdot \V{n}, \V{e}_h^{\circ} - \V{e}_h^{\partial})_e 
     \label{thm:modified_scheme_err2-8} .
\end{align}
Plugging \eqref{thm:modified_scheme_err2-8} into \eqref{thm:modified_scheme_err2-7}, we get
\begin{align}
    -\mu (\Delta \V{\Psi}, \V{e}_h^{\circ}) 
    & = \mu (\nabla_w Q_h\V{\Psi}, \nabla_w \V{e}_h) 
\notag \\
& \quad - \sum_{K \in \mathcal{T}_h} \sum_{e\in \partial K}\mu ((\nabla \V{\Psi}  -  \mathcal{Q}_h(\nabla \V{\Psi})) \cdot \V{n}, \V{e}_h^{\circ} - \V{e}_h^{\partial})_e
    \label{thm:modified_scheme_err2-9} .
\end{align}
Combining \eqref{thm:modified_scheme_err2-4}, \eqref{thm:modified_scheme_err2-5}, and \eqref{thm:modified_scheme_err2-9} gives
\begin{align}
\label{thm:modified_scheme_err2-11}
\| \mathbf{e}_h^\circ \|^2
& \le  C  \|\psi\|_1 ( h^2 \|\mathbf{u}\|_2 +  |\alpha_h| ) + \mu(\nabla_w Q_h\V{\Psi}, \nabla_w \V{e}_h) 
\nonumber
\\
& \quad - \sum_{K \in \mathcal{T}_h} \sum_{e\in \partial K}\mu ((\nabla \V{\Psi}  -  \mathcal{Q}_h(\nabla \V{\Psi})) \cdot \V{n}, \V{e}_h^{\circ} - \V{e}_h^{\partial})_e.
\end{align}

Taking $\V{v} = Q_h \V{\Psi}$ in the first equation of error equation \eqref{Eqn_SchmII_ErrEqn} yields
\begin{align*}
    \mu(\nabla_w \V{e}_h, \nabla_w Q_h \V{\Psi}) = 
    \mu \mathcal{R}_1 (\V{u}, Q_h \V{\Psi}) + \mu \mathcal{R}_2 (\V{u}, Q_h \V{\Psi}) + (e_h^p, \nabla_w \cdot Q_h \V{\Psi} ).
\end{align*}
By the property of projection and $\nabla \cdot \V{\Psi} = 0$, we have
\begin{align*}
    (e_h^p, \nabla_w \cdot Q_h \V{\Psi} ) = (e_h^p, Q_h (\nabla \cdot \V{\Psi}) ) = (e_h^p, \nabla \cdot \V{\Psi}) = 0,
\end{align*}
which gives
\begin{align*}
    \mu(\nabla_w \V{e}_h, \nabla_w Q_h \V{\Psi}) = 
    \mu \mathcal{R}_1 (\V{u}, Q_h \V{\Psi}) + \mu \mathcal{R}_2 (\V{u}, Q_h \V{\Psi}) .
\end{align*}
Using this, from \eqref{thm:modified_scheme_err2-11} we obtain
\begin{align}
     \| \mathbf{e}_h^\circ \|^2 
     & \le C  \|\psi\|_1 ( h^2 \|\mathbf{u}\|_2 +  |\alpha_h| )
     +\mu \mathcal{R}_1 (\V{u}, Q_h \V{\Psi}) + \mu \mathcal{R}_2 (\V{u}, Q_h \V{\Psi})
     \notag \\
    & \quad - \sum_{K \in \mathcal{T}_h} \sum_{e\in \partial K}\mu ((\nabla \V{\Psi}  -  \mathcal{Q}_h(\nabla \V{\Psi})) \cdot \V{n}, \V{e}_h^{\circ} - \V{e}_h^{\partial})_e .
    \label{thm:modified_scheme_err2-13}
\end{align}
Next, we estimate the last three terms in the above inequality.

By the trace inequalities \eqref{trace1} and \eqref{Eqn_BndDscrpDWG} and Lemma~\ref{lemma31}, the third term is bounded as
\begin{align}
   & \sum_{K \in \mathcal{T}_h} \sum_{e\in \partial K}\mu ((\nabla \V{\Psi}  -  Q_h\V{\Psi}) \cdot \V{n}, \V{e}_h^{\circ} - \V{e}_h^{\partial})_e \nonumber
    \\
   & \le \mu (\sum_{K \in \mathcal{T}_h} \sum_{e\in \partial K} h \|\nabla \V{\Psi}  -   \mathcal{Q}_h(\nabla \V{\Psi}) \|^2_e)^{1/2} (\sum_{K \in \mathcal{T}_h} \sum_{e\in \partial K} h^{-1} \|\V{e}_h^{\circ} - \V{e}_h^{\partial} \|^2_e)^{1/2} \nonumber
    \\
    & \le C \mu h \| \V{\Psi} \|_2 |\hspace{-.02in}|\hspace{-.02in}| \mathbf{e}_h |\hspace{-.02in}|\hspace{-.02in}| \nonumber
    \\
    & \le C \mu h^2 \| \V{\Psi} \|_2 \| \V{u} \|_2 +  C \mu \| \V{\Psi} \|_2 |\alpha_h| .
     \label{thm:modified_scheme_err2-16}
\end{align}
By the Cauchy-Schwarz inequality and the trace inequality \eqref{trace1}, the second term is bounded as
\begin{align}
    \mu \mathcal{R}_1 (\V{u}, Q_h \V{\V{\Psi}}) 
    &=  \mu \sum_{K \in \mathcal{T}_h}
  \sum_{e \in \partial K}
   (Q_h^{\partial} \V{\Psi} - Q_h^{\circ} \V{\Psi}, \,
      (\mathcal{Q}_h\nabla\mathbf{u} - \nabla\mathbf{u}) \mathbf{n}_e
   )_{e} \nonumber
   \\ 
   & \le
   \mu (\sum_{K \in \mathcal{T}_h} \sum_{e \in \partial K} \| Q_h^{\circ} \V{\Psi} - \V{\Psi} \|_e^2)^{1/2} (\sum_{K \in \mathcal{T}_h} \sum_{e \in \partial K}  \| \mathcal{Q}_h\nabla\mathbf{u} - \nabla\mathbf{u}\|_e^2)^{1/2} \nonumber
   \\
   & \le C \mu h^2 \| \V{u} \|_2 \| \V{\Psi} \|_2 .
   \label{thm:modified_scheme_err2-14}
\end{align}
By the trace inequalities \eqref{trace1} and \eqref{lifting_property},
for the third term we have
\begin{align}
 \mu \mathcal{R}_2 (\V{u}, Q_h \V{\V{\Psi}}) 
    &= \sum_{K \in \mathcal{T}_h}
   \mu(\Delta \V{u}, \V{\Lambda}_h Q_h\V{\Psi} - Q_h^{\circ} \V{\Psi})
\nonumber    \\
    &= \sum_{K \in \mathcal{T}_h}
   \mu( \Delta \V{u},  \V{\Lambda}_h Q_h\V{\Psi} - Q_h^{\circ} \V{\Psi}) \nonumber
   \notag \\ \displaystyle
   & \le C \mu \|\V{u}\|_2 (\sum_{K \in \mathcal{T}_h}\sum_{e \in \partial K} h \| Q_h^{\partial} \V{\Psi} - Q_h^{\circ} \V{\Psi} \|_e^2)^{1/2}
   \notag \\
   & \le C \mu \|\V{u}\|_2 (\sum_{K \in \mathcal{T}_h}\sum_{e \in \partial K} h \|Q_h^{\circ} \V{\Psi}  - \V{\Psi} \|_e^2)^{1/2}
   \notag \\
   & \le C \mu h^2  \| \V{u} \|_2  \| \V{\Psi} \|_2.
    \label{thm:modified_scheme_err2-15}
\end{align}

Combining \eqref{thm:modified_scheme_err2-13}, \eqref{thm:modified_scheme_err2-16}, \eqref{thm:modified_scheme_err2-14}, and \eqref{thm:modified_scheme_err2-15}, 
and using the regularity 
\[
  \mu  \|\mathbf{\Psi}\|_2 + \|\psi\|_1 \le C \|\mathbf{e}_h^\circ\|,
\]
we obtain (\ref{thm:modified_scheme_err-2-0}).
Inequality (\ref{thm:modified_scheme_err-2}) follows
from (\ref{thm:modified_scheme_err-2-0}), the triangle inequality, and approximation properties.
\end{proof}

Comparing the above theorem with Lemma~\ref{lem:err1}, we can see that the effects of the consistency enforcement
(\ref{b2-4}) are reflected by the factor $|\alpha_h|$ in the error bounds
(\ref{thm:modified_scheme_err-1})--(\ref{thm:modified_scheme_err-2-0}).
In particular, if $\alpha_h = 0$, the bounds in Theorem~\ref{thm:modified_scheme_err}
are the same as those in Lemma~\ref{lem:err1}.
Moreover, from (\ref{alpha}) we can see that the magnitude of $\alpha_h$ depends on how closely
$\langle \V{g}\rangle_e$ is approximated by $Q_h^\partial \V{g}|_e$. Such approximation can be made in $\mathcal{O}(h^2)$,
for instance, by choosing $Q_h^\partial \V{g}|_e$ as the value of $\V{g}$ at the barycenter of $e$.
It can also be made at higher order using a Gaussian quadrature rule for computing $\langle \V{g} \rangle_e$.
When the approximation is at second order or higher,
Theorem~\ref{thm:modified_scheme_err} shows that the optimal convergence order of the WG scheme
is not affected by the consistency enforcement (\ref{b2-4}). 


\section{Convergence analysis of MINRES/GMRES with block Schur complement preconditioning}
\label{sec:precond}

In this section we study the MINRES/GMRES
iterative solution for the modified scheme \eqref{modified_scheme_matrix} (that is singular but consistent)
with block diagonal/triangular Schur complement preconditioning.
We establish bounds for the eigenvalues of the preconditioned systems and for the residual of MINRES/GMRES.

We start with rescaling the unknown variables and rewriting \eqref{modified_scheme_matrix} into
\begin{equation} 
    \begin{bmatrix}
        A & -(B^{\circ})^T \\
       -B^{\circ} & 0
    \end{bmatrix}
    \begin{bmatrix}
        \mu \mathbf{u}_h \\
        \mathbf{p}_h
    \end{bmatrix}
    =
    \begin{bmatrix}
        \mathbf{b}_1 \\
        \mu \tilde{\mathbf{b}}_2
    \end{bmatrix},
    \quad \mathcal{A} = \begin{bmatrix}
        A & -(B^{\circ})^T \\
       -B^{\circ} & 0
    \end{bmatrix}.
    \label{modified_scheme_matrix_2}
\end{equation}
The following lemma provide a bound for the Schur complement of the above system.

\begin{lem}
\label{lem:S-bound}
The Schur complement $S = B^{\circ} A^{-1} (B^{\circ})^T$ for (\ref{modified_scheme_matrix_2}) satisfies
\begin{align}
    0 \le S \le d \; M_p^{\circ},
    \label{lem:S-bound-1}
\end{align}
where $M_p^{\circ}$ is the mass matrix of interior pressure, the sign ``$\le$'' is in the sense of negative semi-definite. 
\end{lem}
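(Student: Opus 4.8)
The plan is to establish the two inequalities separately, in both cases through the Rayleigh-quotient characterization of the quadratic form $\mathbf{q}^T S \mathbf{q}$. Since $A$ is symmetric positive definite on $\mathbf{V}_h^0$ (the triple-bar $|\hspace{-.02in}|\hspace{-.02in}|\cdot|\hspace{-.02in}|\hspace{-.02in}| = \|\nabla_w \cdot\|$ being a norm), $A^{-1}$ is well defined and symmetric positive definite, so that for any interior-pressure vector $\mathbf{q}$,
\[
\mathbf{q}^T S \mathbf{q} = \big((B^\circ)^T \mathbf{q}\big)^T A^{-1} \big((B^\circ)^T \mathbf{q}\big) = \max_{\mathbf{v} \in \mathbf{V}_h^0,\, \mathbf{v}\neq 0} \frac{\big(\mathbf{v}^T (B^\circ)^T \mathbf{q}\big)^2}{\mathbf{v}^T A \mathbf{v}},
\]
where the last equality is the Cauchy--Schwarz inequality in the $A$-inner product, with equality attained at $\mathbf{v} = A^{-1}(B^\circ)^T\mathbf{q}$. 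The lower bound $S \ge 0$ is then immediate, since the numerator is a square and $A^{-1}$ is positive semidefinite, giving $\mathbf{q}^T S \mathbf{q} \ge 0$ for every $\mathbf{q}$.

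For the upper bound I would rewrite the two ingredients of the Rayleigh quotient via the definitions of Section~\ref{SEC:formulation}: the denominator is $\mathbf{v}^T A \mathbf{v} = \|\nabla_w \mathbf{v}\|^2$ by \eqref{A-1}, and the numerator satisfies $\mathbf{v}^T (B^\circ)^T \mathbf{q} = \sum_{K}(\nabla_w\cdot\mathbf{v}, q^\circ)_K$ by \eqref{B-1}. The crux of the whole argument is an identity connecting the weak divergence with the trace of the weak gradient matrix. Choosing the constant test field $\mathbf{w} = \mathbf{e}_j \in RT_0(K)$ (so that $\nabla\cdot\mathbf{w} = 0$) in the definition \eqref{weak-grad-1} of $\nabla_w v_j$ and summing over $j$ gives $\int_K \operatorname{tr}(\nabla_w\mathbf{v}) = \int_{\partial K}\mathbf{v}^\partial\cdot\mathbf{n}$, which by \eqref{wk_div2} equals $(\nabla_w\cdot\mathbf{v},1)_K$. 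As $\nabla_w\cdot\mathbf{v}$ is constant on $K$ and $q^\circ|_K$ is constant, this yields $(\nabla_w\cdot\mathbf{v},q^\circ)_K = (\operatorname{tr}(\nabla_w\mathbf{v}), q^\circ)_K$; equivalently, $\nabla_w\cdot\mathbf{v}$ is exactly the $L^2$-projection of $\operatorname{tr}(\nabla_w\mathbf{v})$ onto the piecewise constants.

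With this identity the bound follows quickly. The Cauchy--Schwarz inequality gives $\sum_K (\nabla_w\cdot\mathbf{v},q^\circ)_K = (\operatorname{tr}(\nabla_w\mathbf{v}), q^\circ) \le \|\operatorname{tr}(\nabla_w\mathbf{v})\|\,\|q^\circ\|$, while the elementwise matrix inequality $|\operatorname{tr}(M)|^2 \le d\,\|M\|_F^2$ (Cauchy--Schwarz over the $d$ diagonal entries) integrates to $\|\operatorname{tr}(\nabla_w\mathbf{v})\|^2 \le d\,\|\nabla_w\mathbf{v}\|^2$. Substituting into the Rayleigh quotient and recognizing $\|q^\circ\|^2 = \mathbf{q}^T M_p^\circ \mathbf{q}$ produces
\[
\mathbf{q}^T S \mathbf{q} \le \max_{\mathbf{v}\neq 0}\frac{d\,\|\nabla_w\mathbf{v}\|^2\,\|q^\circ\|^2}{\|\nabla_w\mathbf{v}\|^2} = d\,\|q^\circ\|^2 = d\,\mathbf{q}^T M_p^\circ\mathbf{q},
\]
which is the claimed bound $S \le d\,M_p^\circ$. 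I expect the only delicate point to be the trace identity of the previous paragraph, namely that the weak divergence reproduces the projected trace of the weak gradient; everything downstream is a routine chain of Cauchy--Schwarz estimates, and the constant $d$ is seen to enter precisely through the dimension count in the matrix trace inequality.
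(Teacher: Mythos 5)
Your proof is correct, and it reaches the bound by a route that is close in spirit to, but packaged differently from, the paper's. The paper works on the velocity side: it first notes that $\mathbf{u}^T (B^{\circ})^T (M_p^{\circ})^{-1} B^{\circ} \mathbf{u} = \sum_{K} \|\nabla_w\cdot\mathbf{u}\|_K^2$, asserts without proof (as something that ``can be verified directly'') the inequality $\sum_{K}\|\nabla_w\cdot\mathbf{u}\|_K^2 \le d\, \sum_K\|\nabla_w\mathbf{u}\|_K^2 = d\,\mathbf{u}^T A\mathbf{u}$, and then transfers the bound to the pressure side through the spectral swap $\sup_{\mathbf{p}\neq 0} \frac{\mathbf{p}^T B^{\circ} A^{-1}(B^{\circ})^T\mathbf{p}}{\mathbf{p}^T M_p^{\circ}\mathbf{p}} = \sup_{\mathbf{u}\neq 0}\frac{\mathbf{u}^T(B^{\circ})^T(M_p^{\circ})^{-1}B^{\circ}\mathbf{u}}{\mathbf{u}^T A\mathbf{u}}$, i.e., the fact that the two generalized eigenvalue problems share their nonzero spectrum. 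You instead stay on the pressure side throughout: you characterize $\mathbf{q}^T S\mathbf{q}$ as a maximum of Rayleigh-type quotients via Cauchy--Schwarz in the $A$-inner product, and then bound the bilinear form $\sum_K(\nabla_w\cdot\mathbf{v},q^{\circ})_K$ directly. Both arguments hinge on exactly the same core fact---the weak divergence is controlled by $\sqrt{d}$ times the Frobenius norm of the weak gradient---and the constant $d$ enters both through the same trace-versus-Frobenius count. What your version buys is self-containedness: your identity that $\nabla_w\cdot\mathbf{v}$ is the piecewise-constant $L^2$ projection of $\operatorname{tr}(\nabla_w\mathbf{v})$ (obtained by testing \eqref{weak-grad-1} with the constant fields $\mathbf{e}_j$ and comparing with \eqref{wk_div2}) supplies precisely the verification that the paper omits, and the Cauchy--Schwarz characterization of $S$ avoids invoking the eigenvalue-swap identity. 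What the paper's version buys is reusability: its intermediate velocity-side bound \eqref{AB-1} is exactly the form cited again in the proofs of Lemma~\ref{lem:eigen_SS} and Proposition~\ref{GMRES_conv}, so organizing the proof around it does double duty later in the analysis.
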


\begin{proof}
From the definitions of operator $B^{\circ}$ in \eqref{B-1} and the mass matrix  and the fact
$\nabla_w \cdot \V{u} \in P_0(\mathcal{T}_h)$, we have
\begin{align*}
\V{u}^{T}  (B^{\circ})^T (M_p^{\circ})^{-1} B^{\circ} \V{u} = \sum_{K \in \mathcal{T}_h}(\nabla_w \cdot \V{u}, \nabla_w \cdot \V{u})_K .
\end{align*}
Moreover, it can be verified directly that
\begin{align*}
   \sum_{K \in \mathcal{T}_h}(\nabla_w \cdot \mathbf{u_h},\nabla_w \cdot \mathbf{u_h})_K \le
d \sum_{K \in \mathcal{T}_h}(\nabla_w \mathbf{u}_h,\nabla_w \mathbf{u}_h )_K .
\end{align*}
Then, since both $A$ \eqref{scheme_matrix} and $M_p^{\circ}$ are symmetric and positive definite, we have
\begin{align}
    \sup_{\mathbf{p} \neq 0} \frac{\V{p}^{T}  B^{\circ}{A}^{-1}(B^{\circ})^T \V{p}}{\V{p}^T M_p^{\circ} \V{p}} 
    & = \sup_{\mathbf{p} \neq 0}  \frac{\V{p}^{T} (M_p^{\circ})^{-\frac12}  B^{\circ}{A}^{-1}(B^{\circ})^T
    (M_p^{\circ})^{-\frac12} \V{p}}{\V{p}^T \V{p}}
    \notag
    \\
    & = \sup_{\mathbf{u} \neq 0} \frac{\V{u}^{T}  (B^{\circ})^T (M_p^{\circ})^{-1} B^{\circ}
    \V{u}}{\V{u}^T A \V{u}} \le d ,
    \label{AB-1}
\end{align}
which implies $S \le d \; M_p^{\circ}$.
\end{proof}

\begin{lem}
    \label{lem:eigen_SS}
    The eigenvalues of $\hat{S}^{-1} S$, where $\hat{S} = M_p^{\circ}$ and $S = B^{\circ} A^{-1} (B^{\circ})^T$,
    are bounded by $\gamma_1 = 0$ and $\gamma_i \in [\beta^2, d]$ for $i = 2, ..., N$.
\end{lem}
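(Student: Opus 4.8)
The plan is to bound the eigenvalues of the generalized eigenvalue problem $S \V{p} = \gamma \, \hat{S} \V{p}$ with $\hat{S} = M_p^{\circ}$, using the fact that both the Schur complement $S = B^{\circ} A^{-1} (B^{\circ})^T$ and the mass matrix $M_p^{\circ}$ are symmetric positive semi-definite. Since $\hat S$ is symmetric positive definite, the eigenvalues of $\hat S^{-1} S$ coincide with the values of the Rayleigh quotient
\begin{align*}
\gamma(\V{p}) = \frac{\V{p}^T S \V{p}}{\V{p}^T M_p^{\circ} \V{p}},
\end{align*}
so the whole task reduces to bounding this quotient from above and below over $\V{p} \neq 0$.

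The upper bound is essentially already in hand. From Lemma~\ref{lem:S-bound}, we have $S \le d \, M_p^{\circ}$, which is exactly the statement that $\gamma(\V{p}) \le d$ for all $\V{p} \neq 0$; hence every eigenvalue satisfies $\gamma_i \le d$. For the lower bound, I would identify the kernel of $S$ first: since $S$ is symmetric positive semi-definite, $\gamma(\V{p}) = 0$ precisely when $(B^{\circ})^T \V{p}$ lies in the kernel considerations associated with $A^{-1}$, i.e.\ when $(B^{\circ})^T \V{p} = 0$. Because the pressure is determined only up to a constant, the constant pressure vector spans this kernel, giving the single zero eigenvalue $\gamma_1 = 0$. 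On the orthogonal complement of this kernel, the inf-sup condition of Lemma~\ref{inf_sup} provides the lower bound: rewriting that condition in matrix form and using $|\hspace{-.02in}|\hspace{-.02in}| \V{v} |\hspace{-.02in}|\hspace{-.02in}|^2 = \V{v}^T A \V{v}$, the supremum over $\V{v}$ of $\V{v}^T (B^{\circ})^T \V{p} / (\V{v}^T A \V{v})^{1/2}$ equals $((B^{\circ})^T \V{p})^T A^{-1} (B^{\circ})^T \V{p})^{1/2} = (\V{p}^T S \V{p})^{1/2}$, so the inf-sup inequality reads $\V{p}^T S \V{p} \ge \beta^2 \|\V{p}\|^2$. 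Combined with the equivalence of $\|\V{p}\|^2$ and $\V{p}^T M_p^{\circ} \V{p}$ (up to mesh-dependent scaling absorbed into $\beta$), this yields $\gamma(\V{p}) \ge \beta^2$ for pressures orthogonal to the constant mode, so $\gamma_i \in [\beta^2, d]$ for $i = 2, \dots, N$.

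The step I expect to require the most care is the identification of the supremum of the inf-sup Rayleigh quotient with $(\V{p}^T S \V{p})^{1/2}$, and the correct handling of the mass-matrix normalization so that the constant $\beta^2$ genuinely matches the one in Lemma~\ref{inf_sup} and is mesh-independent. One must verify that taking the maximizing test function $\V{v} = A^{-1} (B^{\circ})^T \V{p}$ is legitimate and that the norm $\|\V{p}\|$ appearing in the inf-sup condition corresponds to $(\V{p}^T M_p^{\circ} \V{p})^{1/2}$ after the mass matrix is taken as $\hat S$; the quasi-uniformity of the mesh ensures these are comparable with constants independent of $h$. Once the Rayleigh-quotient reformulation and the inf-sup-to-Schur-complement identity are established, the eigenvalue bounds follow immediately from the min-max (Courant--Fischer) characterization restricted to the complement of the constant mode.
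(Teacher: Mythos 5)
Your proof is correct and takes essentially the same approach as the paper: the upper bound comes from Lemma~\ref{lem:S-bound} (i.e., the Rayleigh-quotient estimate \eqref{AB-1}), and the smallest positive eigenvalue is identified with $\beta^2$ from Lemma~\ref{inf_sup} by maximizing the inf-sup quotient at $\V{v}=A^{-1}(B^{\circ})^T\V{p}$, exactly as the paper does when it equates $\gamma_2$ with the smallest positive eigenvalue of $A^{-1/2}(B^{\circ})^T(M_p^{\circ})^{-1}B^{\circ}A^{-1/2}$. One refinement: no ``mesh-dependent scaling absorbed into $\beta$'' is needed, since for piecewise-constant pressures $\|p\|_{L^2(\Omega)}^2=\V{p}^T M_p^{\circ}\V{p}$ holds exactly, so the norm in Lemma~\ref{inf_sup} is precisely the $\hat{S}$-norm and the constant $\beta^2$ matches identically.
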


\begin{proof}
Denoting the eigenvalues of $\hat{S}^{-1} S = (M_p^{\circ})^{-1}B^{\circ} A^{-1} (B^{\circ})^T $
by $\gamma_1 = 0 < \gamma_2 \le \cdots \le \gamma_N$.
From (\ref{AB-1}),
we have $\gamma_N \le d$. Moreover, $\gamma_2$ is the smallest positive eigenvalue of $\hat{S}^{-1} S$, which is equal
to the smallest positive eigenvalue of $A^{-1/2} (B^{\circ})^T (M_p^{\circ})^{-1} B^{\circ} A^{-1/2}$,
which is equal to the square of the inf-sup constant $\beta$, i.e., $\gamma_2 = \beta^2$ (cf. Lemma~\ref{inf_sup}).
Thus, 
$\gamma_1 = 0$ and $\gamma_i \in [\beta^2, d]$ for $i = 2, ..., N$.    
\end{proof}

\subsection{Block diagonal Schur complement preconditioning} 
\label{sec:diagonal}

We first consider a block diagonal Schur complement preconditioner.
Based on Lemma~\ref{lem:S-bound}, we take $\hat{S} = M_p^{\circ}$ as an approximation to the Schur complement $S$ and
choose the block diagonal preconditioner as
\begin{align}
    \mathcal{P}_d = \begin{bmatrix}
        A & 0 \\
        0 & M_p^{\circ}
    \end{bmatrix}.
    \label{PrecondPd}
\end{align}
Since $\mathcal{P}_d$ is SPD, the preconditioned system $\mathcal{P}_d^{-1} \mathcal{A}$ is similar to
$\mathcal{P}_d^{-\frac{1}{2}} \mathcal{A} \mathcal{P}_d^{-\frac{1}{2}}$ which can be expressed as
\begin{align}
    \mathcal{P}_d^{-\frac{1}{2}} \mathcal{A} \mathcal{P}_d^{-\frac{1}{2}}&= 
    \begin{bmatrix}
        A^{\frac{1}{2}} & 0 \\
        0 &(M_p^{\circ})^{\frac{1}{2}}
    \end{bmatrix}^{-1}
    \begin{bmatrix}
        A & -(B^{\circ})^T \\
       -B^{\circ} & 0
    \end{bmatrix}
    \begin{bmatrix}
        A^{\frac{1}{2}} & 0 \\
        0 &(M_p^{\circ})^{\frac{1}{2}}
    \end{bmatrix}^{-1} \notag
    \\ 
    & = \begin{bmatrix}
        \mathcal{I} & -A^{-\frac{1}{2}} (B^{\circ})^T(M_p^{\circ})^{-\frac{1}{2}} \\
        -(M_p^{\circ})^{-\frac{1}{2}} B^{\circ}A^{-\frac{1}{2}} & 0
    \end{bmatrix} .
    \label{diag-precond-system}
\end{align}
The symmetry of the preconditioned system indicates that MINRES can be used for its iterative solution, with convergence analyzed through spectral analysis and residual estimates.

\begin{lem}
\label{lem:eigen_bound_diag}
The eigenvalues of $ \mathcal{P}_d^{-1} \mathcal{A} $ are bounded by 
\begin{align}
    \Bigg[ \frac{1-\sqrt{1+4 d}}{2} , \,
    &\frac{1-\sqrt{1+4 \beta^2}}{2}  \Bigg] 
    \cup \Bigg\{ 0 \Bigg\}  
    \cup \Bigg[ \frac{1+\sqrt{1+4 \beta^2}}{2} , \,
    \frac{1+\sqrt{1+4 d}}{2} \Bigg] .
    \label{eigen_bound_diag}
\end{align}
\end{lem}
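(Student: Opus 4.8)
The plan is to exploit that $\mathcal{P}_d$ is symmetric positive definite, so that $\mathcal{P}_d^{-1}\mathcal{A}$ is similar to the symmetric matrix $\mathcal{M} := \mathcal{P}_d^{-1/2}\mathcal{A}\mathcal{P}_d^{-1/2}$ computed in \eqref{diag-precond-system}; its eigenvalues are therefore real and it suffices to locate the spectrum of $\mathcal{M}$. Writing $\tilde{B} := (M_p^{\circ})^{-1/2} B^{\circ} A^{-1/2}$ and partitioning an eigenvector of $\mathcal{M}$ as $(\mathbf{x},\mathbf{y})$, the eigenvalue problem $\mathcal{M}(\mathbf{x},\mathbf{y})=\lambda(\mathbf{x},\mathbf{y})$ becomes the coupled pair $\mathbf{x}-\tilde{B}^{T}\mathbf{y}=\lambda\mathbf{x}$ and $-\tilde{B}\mathbf{x}=\lambda\mathbf{y}$. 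The key observation I would record first is that $\tilde{B}\tilde{B}^{T}=(M_p^{\circ})^{-1/2}S(M_p^{\circ})^{-1/2}$ is similar to $\hat{S}^{-1}S$, so by Lemma~\ref{lem:eigen_SS} its eigenvalues $\gamma$ satisfy $\gamma\in\{0\}\cup[\beta^{2},d]$.

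For $\lambda\neq 1$ I would eliminate $\mathbf{x}=(1-\lambda)^{-1}\tilde{B}^{T}\mathbf{y}$ from the first equation and substitute into the second to get $\tilde{B}\tilde{B}^{T}\mathbf{y}=(\lambda^{2}-\lambda)\mathbf{y}$. Hence every such $\lambda$ solves $\lambda^{2}-\lambda-\gamma=0$ for some eigenvalue $\gamma$ of $\tilde{B}\tilde{B}^{T}$, giving $\lambda=\tfrac{1}{2}\bigl(1\pm\sqrt{1+4\gamma}\bigr)$. Since $\gamma\mapsto\sqrt{1+4\gamma}$ is increasing, as $\gamma$ ranges over $[\beta^{2},d]$ the two branches sweep out exactly the intervals $[\tfrac{1}{2}(1-\sqrt{1+4d}),\tfrac{1}{2}(1-\sqrt{1+4\beta^{2}})]$ and $[\tfrac{1}{2}(1+\sqrt{1+4\beta^{2}}),\tfrac{1}{2}(1+\sqrt{1+4d})]$, while the degenerate value $\gamma=0$ (the constant-pressure null direction guaranteed by Lemma~\ref{inf_sup}) contributes $\lambda=0$ in this branch.

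It then remains to treat $\lambda=1$ directly: the two equations force $\tilde{B}^{T}\mathbf{y}=0$ and $\mathbf{y}=-\tilde{B}\mathbf{x}$, whence $\tilde{B}\mathbf{x}=0$ and $\mathbf{y}=0$, so the $\lambda=1$ eigenvectors are precisely $(\mathbf{x},0)$ with $\mathbf{x}\in\ker\tilde{B}$ (the discretely divergence-free velocity modes). Collecting the contributions, the spectrum consists of the two intervals above together with the isolated values $0$ and $1$ coming from $\gamma=0$ and from $\ker\tilde{B}$; comparing with \eqref{eigen_bound_diag}, the stated bound records $\{0\}$ and the two intervals, and the only eigenvalue produced in addition is the single value $1$.

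The step I expect to be the main obstacle is exactly this bookkeeping of the degenerate directions. The zero singular value of $\tilde{B}$ turns the associated block into $\bigl[\begin{smallmatrix}1&0\\0&0\end{smallmatrix}\bigr]$, with eigenvalues $0$ and $1$, and each extra velocity mode in $\ker\tilde{B}$ adds a further $\lambda=1$, so I would be careful to reconcile the eigenvalue $1$ (which is a single isolated cluster and thus harmless for MINRES) with the statement \eqref{eigen_bound_diag}. An economical way to make all of this simultaneously rigorous is to diagonalize through the singular value decomposition $\tilde{B}=U\Sigma V^{T}$, which block-diagonalizes $\mathcal{M}$ into $2\times 2$ blocks $\bigl[\begin{smallmatrix}1&-\sigma_i\\-\sigma_i&0\end{smallmatrix}\bigr]$ (with $\sigma_i^{2}=\gamma_i$) plus scalar blocks equal to $1$, so that the full spectrum, including all multiplicities, can be read off at once.
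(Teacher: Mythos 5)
Your proposal is correct, and its computational core is the same as the paper's: eliminate the velocity unknown from the generalized eigenproblem \eqref{mu0system}, arrive at $\lambda(\lambda-1)\hat{S}\mathbf{p}=S\mathbf{p}$, and combine the quadratic formula with the bounds $\gamma_i\in\{0\}\cup[\beta^2,d]$ from Lemma~\ref{lem:eigen_SS}. The substantive difference is your treatment of $\lambda=1$, and there you are right where the paper is not. The paper's proof simply asserts that ``$\lambda=1$ is not an eigenvalue,'' but as you show, every pair $(\mathbf{x},0)$ with $\mathbf{x}\in\ker\tilde{B}$, where $\tilde{B}=(M_p^{\circ})^{-1/2}B^{\circ}A^{-1/2}$ (equivalently, $A^{-1/2}\mathbf{x}$ ranging over $\ker B^{\circ}$, the discretely divergence-free velocities), is fixed by $\mathcal{P}_d^{-1/2}\mathcal{A}\mathcal{P}_d^{-1/2}$. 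Since $B^{\circ}$ has far more columns than rows, this kernel has dimension at least the number of velocity unknowns minus $N$, so $\lambda=1$ is an eigenvalue of large multiplicity; it does not lie in the set \eqref{eigen_bound_diag}, whose right interval begins at $\frac{1}{2}\bigl(1+\sqrt{1+4\beta^2}\bigr)>1$. The lemma should therefore be stated with $\{0\}\cup\{1\}$ in place of $\{0\}$, exactly as your bookkeeping predicts. Your SVD block-diagonalization into blocks $\bigl[\begin{smallmatrix}1&-\sigma_i\\-\sigma_i&0\end{smallmatrix}\bigr]$ plus identity blocks is a clean way to get the full spectrum with multiplicities in one stroke, and it also shows the downstream damage is minimal: the extra eigenvalue at $1$ is harmless for Proposition~\ref{MINRES_conv}, since one polynomial degree can be spent on a factor $(1-\lambda)$, changing the residual bound only by a fixed constant factor while leaving the convergence rate $\bigl(\frac{\sqrt{d}-\beta}{\sqrt{d}+\beta}\bigr)^k$ intact.
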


\begin{proof}
The eigenvalue problem of the preconditioned system $ \mathcal{P}_d^{-1} \mathcal{A} $ can be expressed as
\begin{align}
    \begin{bmatrix}
        A & -(B^{\circ})^T \\
       -B^{\circ} &  0 
    \end{bmatrix}
    \begin{bmatrix}
        \V{u} \\
        \V{p}
    \end{bmatrix} = \lambda
   \begin{bmatrix}
        A & 0 \\
        0 & M_p^{\circ}
    \end{bmatrix} 
        \begin{bmatrix}
        \V{u} \\
        \V{p}
    \end{bmatrix}
    \label{mu0system}
\end{align}
It is clear that $\lambda = 1$ is not an eigenvalue. 
By eliminating $\V{u}$, we obtain
\begin{align}
    \lambda (\lambda - 1) \hat{S} \V{p} = S \V{p} .
    \notag
\end{align}
Denote the eigenvalues of $\hat{S}^{-1} S$ by $\gamma_1 = 0 < \gamma_2 \le ... \le \gamma_N$. Then, we have
\begin{align}
    \lambda^2 - \lambda - \gamma_i = 0, \quad \text{ or } \quad \lambda = \frac{1 \pm \sqrt{1+4 \gamma_i}}{2},
    \qquad i = 2, ..., N.
    \notag
\end{align}
Combining this with the bound of $\gamma_i$ (cf. Lemma~\ref{lem:eigen_SS}), we obtain the bound (\ref{eigen_bound_diag}) for
the eigenvalues of $ \mathcal{P}_d^{-1} \mathcal{A} $.
\end{proof}


\begin{pro}\label{MINRES_conv}
The residual of MINRES applied to the preconditioned system $\mathcal{P}_{d}^{-1/2} \mathcal{A}\mathcal{P}_{d}^{-1/2} $ is bounded by   
\begin{align}
    \frac{\| \V{r}_{2k+1}\| }{\| \V{r}_0 \|} 
    \le
    2
     \Bigg( \frac{\sqrt{d} - \beta}{\sqrt{d} + \beta}\Bigg)^k .
     \label{MINRES_res}
\end{align}
\end{pro}

\begin{proof}
It is known \cite{MINRES-1975} that the residual of MINRES is given by
\[
\| \V{r}_{2k+1}\| = \min\limits_{\substack{p \in \mathbb{P}_{2k+1}\\ p(0) = 1}} \| p (\mathcal{P}_{d}^{-1}\mathcal{A})  \V{r}_0 \|
\le \min\limits_{\substack{p \in \mathbb{P}_{2k+1}\\ p(0) = 1}} \| p (\mathcal{P}_{d}^{-1} \mathcal{A})\| \; \| \V{r}_0 \|,
\]
where $\mathbb{P}_{2k+1}$ is the set of polynomials of degree up to $2k+1$.
Denote the eigenvalues of $\mathcal{P}_{d}^{-1/2} \mathcal{A}\mathcal{P}_{d}^{-1/2} $ by $\lambda_i$, $ i = 2, ..., 2N-1$.
It is known (e.g., see \cite[Section 8.3.4]{Elman-2014} or \cite[Chapter 10]{Vorst_2003}) that the convergence of
MINRES with zero initial guess is not affected by the singular eigenvalues when applied to consistent systems.
Based on the bounds for the eigenvalues of $\mathcal{P}_{d}^{-1}\mathcal{A}$ (cf. Lemma~\ref{lem:eigen_bound_diag})
and \cite[Theorem 6.13]{Elman-2014} (for convergence of MINRES), we have
\begin{align*}
    \frac{\| \V{r}_{2k+1}\| }{\| \V{r}_0 \|} &\le \min\limits_{\substack{p \in \mathbb{P}_{2k}\\ p(0) = 1}} \max\limits_{ i = 1,2,..,2N-1} | p (\lambda_i)| \notag
    \\
   &\le \min\limits_{\substack{p \in \mathbb{P}_{2k}\\ p(0) = 1}} 
    \max_{\lambda \in  [ \frac{1-\sqrt{1+4 d}}{2} , \,
    \frac{1-\sqrt{1+4 \beta^2}}{2}  ] 
    \cup [ \frac{1+\sqrt{1+4 \beta^2}}{2} , \,
    \frac{1+\sqrt{1+4 d}}{2}],} | p(\lambda_i) | \nonumber 
    \\
  &  \le
     2 \Bigg( \frac{\sqrt{d} - \beta}{\sqrt{d}+\beta} \Bigg)^k.
\end{align*}
\end{proof}

Note that  $\| \V{r}_{2k+2}\| \le \| \V{r}_{2k+1}\|$ holds due to the minimization property of MINRES \cite{Elman-2014}.
An important consequence of Proposition~\ref{MINRES_conv} is that the number of MINRES required to reach a prescribed level of the residual, when preconditioned by $\mathcal{P}_d$, is independent of size of the discrete problem and the parameter $\mu$.
Hence, $\mathcal{P}_d$ is an effective preconditioner for \eqref{modified_scheme_matrix_2}.

\subsection{Block triangular Schur complement preconditioning}
\label{sec:triangular}
We now consider block triangular Schur complement preconditioners and the convergence of GMRES.

We start by pointing out that several inexact block triangular Schur complement preconditioners have been studied
in \cite[Appendix A]{HuangWang_arxiv_2024} for general saddle point systems and are shown to perform very similarly.
As such, we focus here only on the following block lower triangular preconditioner,
\begin{equation}
\label{PrecondP}
\mathcal{P}_t =  \begin{bmatrix} A & 0 \\  -B^{\circ} & - M_p^{\circ} \end{bmatrix} .
\end{equation}
This preconditioner is nonsingular. Moreover, it corresponds to the choice $\hat{S} = M_p^{\circ}$ (the pressure mass matrix)
for the approximation of the Schur complement $S = B^{\circ} A^{-1} (B^{\circ})^T$.


\begin{pro}
\label{GMRES_conv}
The residual of GMRES applied to the preconditioned system $\mathcal{P}_{t}^{-1} \mathcal{A}$ is bounded by
\begin{align}
\frac{\| \V{r}_k\|}{\| \V{r}_0\|}
\le 2 \left (1+d+\Big (\frac{d \, \lambda_{\max} (M_p^{\circ})}{\lambda_{\min} (A)} \Big)^{\frac{1}{2}} \right )
\Big(\frac{\sqrt{d} - \beta}{\sqrt{d} + \beta}\Big)^{k-2} .  
\label{GMRES-residual-5}
\end{align}
\end{pro}

\begin{proof}
Applying \cite[Lemma A.1]{HuangWang_arxiv_2024} to (\ref{modified_scheme_matrix_2}) with preconditioner (\ref{PrecondP}),
we obtain the bound for the GMRES residual as
\begin{align}
    \label{GMRES-residual-1}
    \frac{\| \V{r}_k \|}{\| \V{r}_0\|}
\leq \left(1+\| \hat{S}^{-1}S \|+\|A^{-1}(B^{\circ})^T\|\right) \min\limits_{\substack{p \in \mathbb{P}_{k-1}\\ p(0) = 1}} \| p( \hat{S}^{-1} S) \|.
\end{align}
From Lemma~\ref{lem:S-bound}, we have $\| \hat{S}^{-1}S\| \le d$.
In the following, we estimate $\| A^{-1} (B^{\circ})^T \|$ and $\min\limits_{p} \| p(\hat{S}^{-1}S) \|$.

For $\| A^{-1} (B^{\circ})^T \|$, using (\ref{AB-1}) and the fact that $A$ and $M_p^{\circ}$ are SPD,  we have
\begin{align}
    \| A^{-1} (B^{\circ})^T \|^2 
    &= \sup_{\V{p} \neq 0} \frac{\V{p}B^{\circ} A^{-1} A^{-1} (B^{\circ})^T \V{p}}{\V{p}^T  \V{p}} \nonumber
    \\
   & = \sup_{\V{p} \neq 0} \frac{\V{p} (M_p^{\circ})^{\frac{1}{2}} (M_p^{\circ})^{-\frac{1}{2}}B^{\circ} A^{-1}A^{-1} (B^{\circ})^T (M_p^{\circ})^{-\frac{1}{2}} (M_p^{\circ})^{\frac{1}{2}} \V{p}}{\V{p}^T \V{p}}
   \notag \\
   & \le \lambda_{\max} (A^{-1}) \lambda_{\max} (M_p^{\circ})
 \sup_{\V{u} \neq 0} \frac{\V{u}^T B^{\circ} (M_p^{\circ})^{-1} (B^{\circ})^T \V{u}}{\V{u}^T A\V{u}}
 \notag \\
 & \le \frac{d\, \lambda_{\max} (M_p^{\circ})}{\lambda_{\min} (A)} 
    \label{bound2}.
\end{align}

Next, we look into the factor $\min_p \| p(\hat{S}^{-1}S) \|$.
Notice that both $S$ and $\hat{S}$ are symmetric for the current situation. Moreover, $S$ is singular with its null space
given by $\text{Null}((B^{\circ})^T)$. Since the modified scheme is consistent, the corresponding system with $\hat{S}^{-1}S$
is consistent as well. It is known (e.g., see \cite[Section 8.3.4]{Elman-2014} or \cite[Chapter 10]{Vorst_2003}) that the convergence of
GMRES with zero initial guess is not affected by the singular eigenvalues when applied to consistent systems.
Thus, denoting the eigenvalues of $\hat{S}^{-1}S$ by $\gamma_1 = 0 < \gamma_2 \le \cdots \le \gamma_N$, we have
\[
\min\limits_{\substack{p \in \mathbb{P}_{k-1}\\ p(0) = 1}} \| p(\hat{S}_1^{-1}S_1) \|
= \min\limits_{\substack{p \in \mathbb{P}_{k-2}\\ p(0) = 1}} \max_{i=2,..., N} |p(\gamma_i)|
\le \min\limits_{\substack{p \in \mathbb{P}_{k-2}\\ p(0) = 1}} \max_{\gamma \in [\gamma_2,\gamma_N]} |p(\gamma)| .
\]
From Lemma~\ref{lem:eigen_SS} and using shifted Chebyshev polynomials for the above minmax problem
(e.g., see \cite[Pages 50-52]{Greenbaum-1997}), we get
\[
\min\limits_{\substack{p \in \mathbb{P}_{k-2}\\ p(0) = 1}} \max_{\gamma \in [\beta^2,d]} |p(\gamma)|
\le 2 \Big(\frac{\sqrt{d} - \beta}{\sqrt{d} + \beta}\Big)^{k-2} .
\]
Combining the above results, we obtain (\ref{GMRES-residual-5}).
\end{proof}

Recall that $A$ is the stiffness matrix of the WG approximation of the Laplacian operator.
It is known that for quasi-uniform meshes, both $\lambda_{\max}(M_p^{\circ})$ and $\lambda_{\min} (A)$ are in the order
of $1/N$ and $\lambda_{\max}/\lambda_{\min} (A)$ is bounded above by a constant independent of $h$ and $\mu$.
Then, \eqref{GMRES-residual-5} implies that the convergence of GMRES, applied to the modified scheme (\ref{modified_scheme_matrix_2}),
with the block triangular Schur complement conditioner (\ref{PrecondP}), is independent of $h$ and $\mu$.

\section{Numerical experiments}
\label{SEC:numerical}

In this section we present some two- and three-dimensional numerical results to demonstrate the performance of MINRES/GMRES
with the block Schur complement preconditioners for the modified system (\ref{modified_scheme_matrix_2}).
We use MATLAB's function {\em minres} with $tol = 10^{-9}$ for 2D examples and  $tol = 10^{-8}$ for 3D example, with a maximum of 1000 iterations and the zero vector as the initial guess.
Similarly, we use MATLAB's function {\em gmres} with $tol = 10^{-9}$ for 2D examples and  $tol = 10^{-8}$ for 3D examples, $restart = 30$, and the zero vector as the initial guess.
The implementation of the block preconditioners requires the exact inversion of the diagonal blocks. The inversion of the diagonal mass matrix $M_{p}^{\circ}$ is straightforward.
The leading block $A$ represents the WG approximation of the Laplacian operator, and linear systems associated with $A$ are solved using the conjugate gradient method
preconditioned with incomplete Cholesky decomposition. 
The incomplete Cholesky decomposition is carried out using MATLAB's function {\em ichol} with threshold
dropping and the drop tolerance is $10^{-3}$.
Numerical experiments are conducted on triangular and tetrahedral meshes as shown in Fig.~\ref{fig:Mesh}.

\begin{figure}
    \centering
  \subfigure[A triangular mesh] {\includegraphics[width=0.3\linewidth]{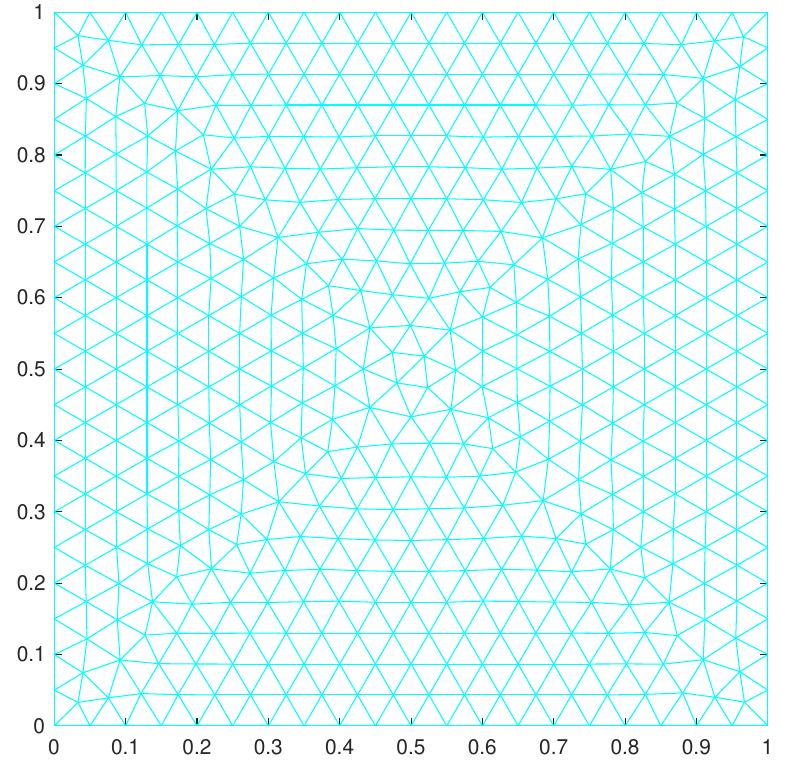}}
  \subfigure[A tetrahedral mesh] {\includegraphics[width=0.3\linewidth]{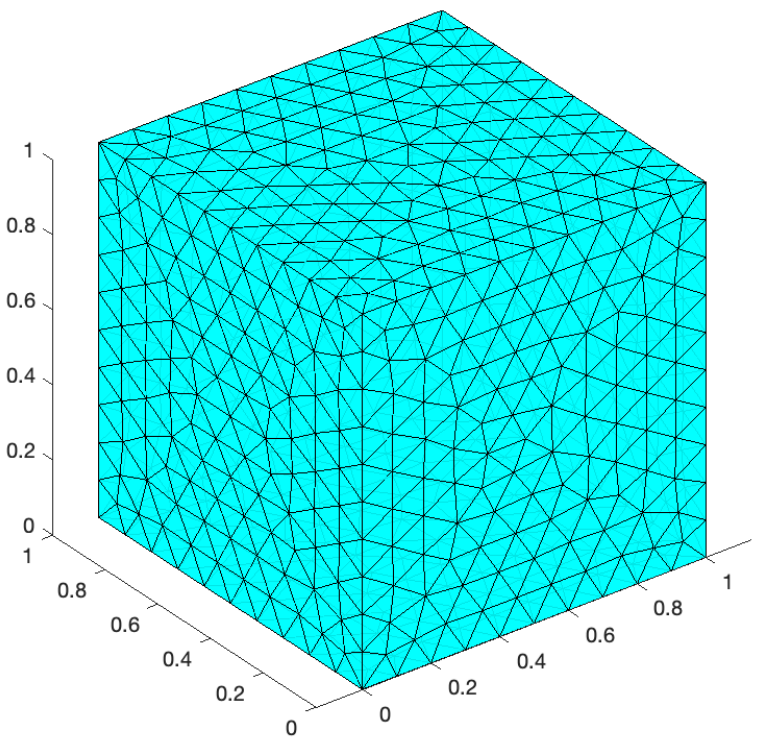}}
    \caption{Examples of meshes used for the computation in two and three dimensions.}
    \label{fig:Mesh}
\end{figure}

\subsection{The two-dimensional example}
This two-dimensional (2D) example is adopted from \cite{Mu.2020} where 
$\Omega = (0,1)^2$, 
\begin{align*}
\V{u} = 
\begin{bmatrix}
    -e^x(y \cos(y)+\sin(y)) \\
    e^x y \sin(y)
\end{bmatrix}, \quad
p = 2 e^x \sin(y),
\quad \V{f} = 
\begin{bmatrix}
    2(1-\mu) e^x  \sin(y) \\
    2(1-\mu) e^x \cos(y)
\end{bmatrix}.
\end{align*}
We take $\mu = 1$ and $10^{-4}$.

To examine the convergence of the modified scheme \eqref{modified_scheme_matrix}, we show the $L^2$ error of the velocity
in Table~\ref{P1conv_2d}. Theoretically, this error has the optimal first-order convergence as stated in Theorem~\ref{thm:modified_scheme_err}.
In Table~\ref{P1conv_2d}, the $L^2$ error of velocity for both $\mu = 1$ and $10^{-4}$ shows the first-order convergence rate.
Interestingly, the error is almost the same for $\mu = 1$ and $10^{-4}$.
\begin{table}[h]
    \centering
        \caption{The 2D Example. The $L^2$ error of the velocity for the modified scheme (\ref{modified_scheme_matrix_2})
        (consistency enforcement) with $\mu = 1$ and $\mu = 10^{-4}$.}
    \begin{tabular}{|c||c|c||c|c|}
        \hline
        & \multicolumn{2}{c||}{$\mu = 1$} & \multicolumn{2}{c|}{$\mu = 10^{-4}$} \\ \hline
        $N$ & $\| \V{u} - \V{u}_h \|$ & conv. rate & $\| \V{u} - \V{u}_h \|$  &conv. rate  \\ \hline
       232  & 9.428879e-02 & -- & 9.428878e-02 & --\\ \hline
       918  &4.719342e-02  & 1.006 &4.719342e-02 &  1.006\\ \hline
       3680  & 2.352019e-02  & 1.003 & 2.352019e-02 & 1.003 \\ \hline  
      14728  & 1.174930e-02 & 1.000 & 1.174930e-02 & 1.000 \\ \hline  
    58608  &  5.893651e-03  & 0.999 & 5.893651e-03  &  0.999\\ \hline
    \end{tabular}
    \label{P1conv_2d}
\end{table}

Recall from Proposition~\ref{MINRES_conv} that the residual of MINRES with the preconditioner $\mathcal{P}_d$
is independent of $\mu$ and $h$. Table \ref{P1steps-2d-minres} shows a consistent number
of MINRES iterations as the mesh is refined. Furthermore, the iteration remains stable for small values of $\mu$.
Similar observations can be made for GMRES from Table~\ref{P1steps-2d}.

The good performance of the block Schur complement preconditioners $\mathcal{P}_d$ and $\mathcal{P}_t$
can be explained from the  number of MINRES/GMRES iterations shown in Tables~\ref{P1steps-2d-minres} and \ref{P1steps-2d}.
The number of iterations required to reach convergence
remains relatively small and consistent with mesh refinement and various values of $\mu$. 
If without preconditioning, on the other hand, MINRES and GMRES would take more than 30,000 iterations to reach convergence,
which is a significant difference compared to the solvers with preconditioning.
We also observe that the number of MINRES iterations is almost double that of GMRES, as shown by comparing Tables~\ref{P1steps-2d-minres} and \ref{P1steps-2d}. This observation is consistent with the estimates in Proposition~\ref{MINRES_conv} and Proposition~\ref{GMRES_conv}.




\begin{table}[tbh!]
    \centering
    \caption{The 2D Example. The number of MINRES iterations required to reach convergence for the preconditioned system $\mathcal{P}_d^{-1} \mathcal{A}$ with block diagonal Schur complement preconditioning.}
    \label{2d-cv-minres}
    \begin{tabular}{|c|c|c|c|c|c|}
    \hline
\diagbox{$\mu$}{$N$} & 232 & 918 & 3680 & 14728 & 58608 \\
    \hline
         $1$ & 43 & 47 & 49 & 49 & 47 \\
   \hline
         $10^{-4}$ & 42 & 48 & 54 & 58 & 60 \\ 
    \hline
    \end{tabular}
    \label{P1steps-2d-minres}
\end{table}



\begin{table}[tbh!]
    \centering
    \caption{The 2D Example. The number of GMRES iterations required to reach convergence for the preconditioned system $\mathcal{P}_t^{-1} \mathcal{A}$ with block triangular Schur complement preconditioning.}
    \label{2d-cv}
    \begin{tabular}{|c|c|c|c|c|c|}
    \hline
\diagbox{$\mu$}{$N$} & 232 & 918 & 3680 & 14728 & 58608 \\
    \hline
         $1$ & 21 & 23 & 24 & 25 & 25 \\
   \hline
         $10^{-4}$ & 23 & 25 & 27 & 27 & 27 \\ 
    \hline
    \end{tabular}
    \label{P1steps-2d}
\end{table}




\subsection{The three-dimensional example}
This three-dimensional (3D) example is adopted from \textit{deal.II} \cite{dealii} \texttt{step-56}. 
Here, $\Omega = (0,1)^3$ and 
\begin{align*}
& \V{u} = 
\begin{bmatrix}
    2 \sin(\pi x) \\
    -\pi y \cos(\pi x) \\
    -\pi z \cos(\pi x)
\end{bmatrix}, \quad
p = \sin(\pi x) \cos(\pi y) \sin(\pi z),
\\
& \V{f} = 
\begin{bmatrix}
    2 \mu \pi^2 \sin(\pi x) + \pi \cos(\pi x) \cos(\pi y) \sin(\pi z) \\
    -\mu \pi^3 y \cos(\pi x) - \pi \sin(\pi y) \sin(\pi x) \sin(\pi z)\\
    -\mu \pi^3 z \cos(\pi x) + \pi \sin(\pi x) \cos(\pi y) \cos(\pi z)
\end{bmatrix}.
\end{align*}

The number of MINRES/GMRES iterations for
the preconditioned systems for $\mu = 1$ and $10^{-4}$ is listed in Tables~\ref{P1-3d-cv-minres} and
\ref{P1-3d-cv}, respectively.
The number of iterations remains relatively small and consistent with various in $\mu$ and $h$,
which is consistent with the fact that the bounds of the residual in Propositions~\ref{MINRES_conv} and \ref{GMRES_conv}
are independent of $\mu$ and $h$.
Once again, MINRES/GMRES without preconditioning would require more than 30,000 iterations to reach convergence. 

\begin{table}[tbh!]
    \centering
    \caption{The 3D Example. The number of MINRES iterations required to reach convergence for the preconditioned system $\mathcal{P}_d^{-1} \mathcal{A}$ with block diagonal Schur complement preconditioning.}
    \label{P1-3d-cv-minres}
    \begin{tabular}{|c|c|c|c|c|c|}
    \hline
\diagbox{$\mu$}{$N$} & 4046 & 7915 & 32724 & 112078 & 266555\\
   \hline
         $1$ & 59&59  &63  & 67 & 67 \\ 
    \hline
         $10^{-4}$ & 62 &62  & 70 &  76 &78  \\
    \hline
    \end{tabular}
\end{table}



\begin{table}[tbh!]
    \centering
    \caption{The 3D Example. The number of GMRES iterations required to reach convergence for the preconditioned system $\mathcal{P}_t^{-1} \mathcal{A}$  with block triangular Schur complement preconditioning.}
    \label{P1-3d-cv}
    \begin{tabular}{|c|c|c|c|c|c|}
    \hline
\diagbox{$\mu$}{$N$} & 4046 & 7915 & 32724 & 112078 & 266555\\
    \hline
         $1$ &30  & 30 & 31 &  32 & 33 \\
   \hline
         $10^{-4}$ & 34& 35 & 37 & 38 & 38 \\ 
    \hline
    \end{tabular}
\end{table}

\section{Conclusions}
\label{SEC:conclusions}
In the previous sections we have studied the iterative solution of the singular saddle point system arising from
the lowest-order weak Galerkin discretization of Stokes flow problems.
In general the system is inconsistent when the boundary datum is not identically zero.
This inconsistency can cause iterative methods such as MINRES and GMRES to fail to converge.
We have proposed a simple strategy to enforce the consistency by modifying the right-hand side of the second equation
(cf. \eqref{b2-4}). We have shown in Theorem~\ref{thm:modified_scheme_err} that
the optimal-order convergence of the numerical solutions is not affected by the modification.

We have considered block diagonal and triangular Schur complement preconditioning
for the iterative solution of the modified system.
In Section~\ref{sec:diagonal}, we have studied the block diagonal Schur complement preconditioner
\eqref{PrecondPd} and established bounds for the eigenvalues of the preconditioned system (see Lemma~\ref{lem:eigen_bound_diag})
as well as  for the residual of MINRES applied to the preconditioned system (cf. Proposition~\ref{MINRES_conv}). 
In Section~\ref{sec:triangular}, we have studied the block triangular Schur complement preconditioner (\ref{PrecondP})
and derived the asymptotic error bound in Proposition~\ref{GMRES_conv} for GMRES applied to the preconditioned system.
These bounds show that both the convergence of MINRES and GMRES for the corresponding preconditioned systems
is independent of $h$ and $\mu$. The optimal-order convergence of the modified scheme \eqref{modified_scheme_matrix_2}
and effectiveness of the block diagonal
and triangular Schur complement preconditioners have been verified by two- and three-dimensional numerical examples
in Section~\ref{SEC:numerical}.

\section*{Acknowledgments}

W.~Huang was supported in part by the Air Force Office of Scientific Research (AFOSR) grant FA9550-23-1-0571
and the Simons Foundation grant MPS-TSM-00002397.


\bibliographystyle{abbrv}

\end{document}